\tikzset{commutative diagrams/arrow style=math font}
\newlength{\myarrowsize} 
\newenvironment{diagram*}[2]{%
\[%
\begin{tikzpicture}[>=cmto,baseline=(current bounding box.center),%
	to/.style={->,font=\scriptsize,cap=round},%
	into/.style={cmhook->,font=\scriptsize,cap=round},%
	onto/.style={-cmonto,font=\scriptsize,cap=round},%
	math/.style={matrix of math nodes, row sep=#2, column sep=#1,%
		text height=1.5ex, text depth=0.25ex}]%
}{%
\end{tikzpicture}%
\]%
\ignorespacesafterend%
}
\DeclareMathOperator{\rk}{rk}
\DeclareMathOperator{\Div}{Div}
\DeclareMathOperator{\id}{id}
\DeclareMathOperator{\Supp}{Supp}
\DeclareMathOperator{\codim}{codim}
\DeclareMathOperator{\Var}{Var}
\newcommand{\sA}{\scr{A}}
\newcommand{\sB}{\scr{B}}
\newcommand{\sE}{\scr{E}}
\newcommand{\sL}{\scr{L}}
\newcommand{\sF}{\scr{F}}
\newcommand{\sG}{\scr{G}}
\newcommand{\sM}{\scr{M}}
\newcommand{\sN}{\scr{N}}
\newcommand{\sO}{\scr{O}}
\newcommand{\sP}{\scr{P}}
\newcommand{\sQ}{\scr{Q}}
\newcommand{\sT}{\scr{T}}
\newcommand{\sW}{\scr{W}}
\newcommand{\theoremref}[1]{\hyperref[#1]{Theorem~\ref*{#1}}}
\newcommand{\lemmaref}[1]{\hyperref[#1]{Lemma~\ref*{#1}}}
\newcommand{\definitionref}[1]{\hyperref[#1]{Definition~\ref*{#1}}}
\newcommand{\propositionref}[1]{\hyperref[#1]{Proposition~\ref*{#1}}}
\newcommand{\conjectureref}[1]{\hyperref[#1]{Conjecture~\ref*{#1}}}
\newcommand{\corollaryref}[1]{\hyperref[#1]{Corollary~\ref*{#1}}}
\newcommand{\exampleref}[1]{\hyperref[#1]{Example~\ref*{#1}}}
\let\old@caption\caption
\renewcommand*{\caption}[1]{%
	\setcounter{figure}{\value{equation}}%
	\stepcounter{equation}%
	\old@caption{#1}\relax%
}
\newcounter{intro}
\newtheorem{intro-conjecture}[intro]{Conjecture}
\newtheorem{intro-corollary}[intro]{Corollary}
\newtheorem{intro-theorem}[intro]{Theorem}
\newcommand{\parref}[1]{\hyperref[#1]{\S\ref*{#1}}}
\newcommand*\if@single[3]{%
  \setbox0\hbox{${\mathaccent"0362{#1}}^H$}%
  \setbox2\hbox{${\mathaccent"0362{\kern0pt#1}}^H$}%
  \ifdim\ht0=\ht2 #3\else #2\fi
  }
\newcommand*\rel@kern[1]{\kern#1\dimexpr\macc@kerna}
\newcommand*\widebar[1]{\@ifnextchar^{{\wide@bar{#1}{0}}}{\wide@bar{#1}{1}}}
\newcommand*\wide@bar[2]{\if@single{#1}{\wide@bar@{#1}{#2}{1}}{\wide@bar@{#1}{#2}{2}}}
\newcommand*\wide@bar@[3]{%
  \begingroup
  \def\mathaccent##1##2{%
    \if#32 \let\macc@nucleus\first@char \fi
    \setbox\z@\hbox{$\macc@style{\macc@nucleus}_{}$}%
    \setbox\tw@\hbox{$\macc@style{\macc@nucleus}{}_{}$}%
    \dimen@\wd\tw@
    \advance\dimen@-\wd\z@
    \divide\dimen@ 3
    \@tempdima\wd\tw@
    \advance\@tempdima-\scriptspace
    \divide\@tempdima 10
    \advance\dimen@-\@tempdima
    \ifdim\dimen@>\z@ \dimen@0pt\fi
    \rel@kern{0.6}\kern-\dimen@
    \if#31
      \overline{\rel@kern{-0.6}\kern\dimen@\macc@nucleus\rel@kern{0.4}\kern\dimen@}%
      \advance\dimen@0.4\dimexpr\macc@kerna
      \let\final@kern#2%
      \ifdim\dimen@<\z@ \let\final@kern1\fi
      \if\final@kern1 \kern-\dimen@\fi
    \else
      \overline{\rel@kern{-0.6}\kern\dimen@#1}%
    \fi
  }%
  \macc@depth\@ne
  \let\math@bgroup\@empty \let\math@egroup\macc@set@skewchar
  \mathsurround\z@ \frozen@everymath{\mathgroup\macc@group\relax}%
  \macc@set@skewchar\relax
  \let\mathaccentV\macc@nested@a
  \if#31
    \macc@nested@a\relax111{#1}%
  \else
    \def\gobble@till@marker##1\endmarker{}%
    \futurelet\first@char\gobble@till@marker#1\endmarker
    \ifcat\noexpand\first@char A\else
      \def\first@char{}%
    \fi
    \macc@nested@a\relax111{\first@char}%
  \fi
  \endgroup
}
\DeclareFontFamily{OMS}{rsfs}{\skewchar\font'60}
\DeclareFontShape{OMS}{rsfs}{m}{n}{<-5>rsfs5 <5-7>rsfs7 <7->rsfs10 }{}
\DeclareSymbolFont{rsfs}{OMS}{rsfs}{m}{n}
\DeclareSymbolFontAlphabet{\scr}{rsfs}
\numberwithin{figure}{section}
\DeclareMathOperator{\disc}{disc}
\DeclareMathOperator{\NE}{NE}
\DeclareMathOperator{\Gal}{Gal}
\DeclareMathOperator{\I}{I}
\DeclareMathOperator{\R}{R}
\DeclareMathOperator{\dR}{\mathbf{R}}
\newcommand{\bC}{\mathbb{C}}
\newcommand{\bN}{\mathbb{N}}
\newcommand{\bQ}{\mathbb{Q}}
\newcommand{\hooklongrightarrow}{\lhook\joinrel\longrightarrow}
\theoremstyle{plain}
\newtheorem{theorem}{Theorem}[section]
\newtheorem{proposition}[theorem]{Proposition}
\newtheorem{lemma}[theorem]{Lemma}
\newtheorem{conjecture}[theorem]{Conjecture}
\newtheorem{question}[theorem]{Question}
\theoremstyle{definition}
\newtheorem{definition}[theorem]{Definition}
\theoremstyle{remark}
\newtheorem{remark}[theorem]{Remark}
\newtheorem{notation}[theorem]{Notation}
\newtheorem{set-up}[theorem]{Set-up}
\newtheorem{claim}[theorem]{Claim}
\setlist[enumerate]{label=(\thetheorem.\arabic*), before={\setcounter{enumi}{\value{equation}}}, after={\setcounter{equation}{\value{enumi}}}}
\numberwithin{equation}{theorem}
\begin{document}

\title[Kodaira dimension of base spaces]{On the Kodaira dimension of base spaces of families of manifolds}

\author{Behrouz Taji} \address{Behrouz Taji, School of Mathematics and Statistics
  F07, The University of Sydney, NSW 2006 Australia}
\email{\href{mailto:behrouz.taji@sydney.edu.au}{behrouz.taji@sydney.edu.au}}
\urladdr{\href{http://www.maths.usyd.edu.au/u/behrouzt/}
  {http://www.maths.usyd.edu.au/u/behrouzt/}}

\keywords{Families of manifolds, minimal models, Kodaira dimension, variation of Hodge structures,
 moduli of polarized varieties.}

\subjclass[2010]{14D06, 14D23, 14E05, 14E30, 14D07.}


\setlength{\parskip}{0.19\baselineskip}


\begin{abstract}
We prove that the variation in a smooth projective family of varieties  
admitting a good minimal model forms a lower bound for the Kodaira dimension of the 
base, if the dimension of the base is at most five and its Kodaira dimension is non-negative. 
This gives an affirmative answer to 
the conjecture of Kebekus and Kov\'acs
for base spaces of dimension at most five.
\end{abstract}

\maketitle


\section{Introduction and main results}
\label{sect:Section1-Introduction}

\subsection{Introduction}
A conjecture of Viehweg~\cite{Viehweg01}, generalizing Shafarevich Conjecture
for family of curves~\cite{Shaf63}, predicted that
if fibers of a smooth projective family $f_U:U\to V$ are
canonically polarized, then $\kappa(V) = \dim(V)$, assuming that $f_U$ has  
\emph{maximal variation}. The problem was subsequently generalized to the case of 
fibers with good minimal models, cf.~\cite{Vie-Zuo01} and \cite{PS15}.
This conjecture is usually referred to as \emph{Viehweg Hyperbolicity Conjecture}
and has been recently settled in full generality.

Generalizing Viehweg's conjecture, 
in their groundbreaking series of papers, Kebekus and Kov\'acs  
predicted that variation in the smooth family 
$f_U$, which we denote by $\Var(f_U)$, should be closely connected to $\kappa(V)$, even when it is \emph{not} maximal.

\begin{conjecture}[\protect{Kebekus-Kov\'acs Hyperbolicity Conjecture.~I, cf.~\cite[Conj.~1.6]{KK08}}]\label{conj:kk0}
Let $f_U: U\to V$ be a smooth projective family whose general fiber
admits a good minimal model \footnote{The original conjecture of Kebekus and Kov\'acs was formulated
for the case of canonically polarized fibers.}. Then, either
\begin{enumerate}
\item $\kappa(V) = -\infty$ and $\Var(f_U) < \dim V$, or 
\item $\kappa(V)\geq 0$ and $\Var(f_U)\leq \kappa(V)$.
\end{enumerate}
\end{conjecture}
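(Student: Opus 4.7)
\smallskip

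\noindent\emph{Proof proposal.}
The plan is to combine the Viehweg--Zuo construction of big subsheaves of symmetric powers of log cotangent bundles with the log minimal model program and pseudo-effectivity results of Campana--P\u{a}un type. Since the full \conjectureref{conj:kk0} is presumably out of reach, I describe how I would attack the restricted case $\dim V \le 5$, $\kappa(V) \ge 0$ that is the focus of the paper.

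First I would fix a smooth projective compactification $Y$ of $V$ with $D := Y \setminus V$ a simple normal crossing divisor, and, after semistable reduction, extend $f_U$ to a family $f : X \to Y$ that is well-behaved over $D$. The machinery of Viehweg--Zuo, together with its sharpenings by Popa--Schnell (and refinements tailored to non-maximal variation), then produces a coherent subsheaf $\sA \subset \Sym^N \Omega^1_Y(\log D)$ for some $N \gg 0$ whose Iitaka--Kodaira dimension equals $\Var(f_U)$; this sheaf records the infinitesimal information of the moduli map together with the positivity of the underlying variation of Hodge structure.

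Next I would link $\sA$ to $K_Y + D$ via minimal model theory. Under the hypotheses $\dim V \le 5$ and $\kappa(V) \ge 0$, one can run the log MMP on $(Y, D)$ and, by BCHM and its extensions in low dimension, arrive at a good minimal model $(Y', D')$ with $K_{Y'} + D'$ semi-ample, inducing an Iitaka fibration $\pi : Y' \to Z$ with $\dim Z = \kappa(V)$. After pulling $\sA$ back along a suitable log resolution and passing to an appropriate ramified cover to absorb fractional contributions along $D'$, the problem reduces to showing that any subsheaf of $\Sym^N \Omega^1_{Y'}(\log D')$ with Iitaka--Kodaira dimension $r$ must factor geometrically through $\pi$, forcing $r \le \dim Z$.

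The main obstacle I expect is precisely this final pseudo-effectivity step on the Iitaka fibration. The idea is to restrict to a general fiber $F$ of $\pi$, where $(K_{Y'}+D')|_F$ is numerically trivial, and then to invoke a Campana--P\u{a}un pseudo-effectivity theorem for subsheaves of $\Sym^N \Omega^1_F(\log D'|_F)$ to rule out any big subsheaf there; this then yields $\Var(f_U) \le \dim Z = \kappa(V)$. The dimension restriction $\dim V \le 5$ enters both through the availability of good minimal models in the dlt setting and through the delicate management of Campana orbifold cotangent sheaves under birational modifications and ramified covers. Propagating $\sA$ faithfully through this chain --- across the singularities of $Y'$ and onto the general fiber of $\pi$ --- while preserving its Iitaka--Kodaira dimension is the hardest technical point I anticipate.
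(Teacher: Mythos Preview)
Your outline has the right overall shape --- a Viehweg--Zuo type subsheaf, then birational geometry to compare its positivity with that of $K_Y+D$ --- but there are two genuine gaps, and each is precisely a difficulty the paper has to work around rather than assume away.

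\textbf{The Viehweg--Zuo subsheaf.} You assert that the Viehweg--Zuo/Popa--Schnell machinery produces $\sA\subset\Sym^N\Omega^1_Y(\log D)$ with $\kappa(\sA)=\Var(f_U)$. For families whose fibers merely admit a good minimal model (rather than being canonically polarized, or coming with a moduli map), this is not known; the paper stresses exactly this point. What one can actually build (Theorem~1.3) is a line bundle $\sL$ with $\kappa(\sL)\ge\Var(f_U)$ together with a \emph{pseudo-effective} $\sB$ and an inclusion $\sL\otimes\sB\hookrightarrow\Omega_X^{\otimes i}(\log D)$. There is no control on $\kappa(\sL\otimes\sB)$, so your later restriction-to-a-fiber argument cannot be fed the input it needs. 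The paper instead extracts from the inclusion only the consequence that $i(K_X+D)-L$ is pseudo-effective (via Campana--P\u{a}un applied to the quotient), and then compares $\kappa(L)$ with $\kappa(K_X+D)$ by a separate vanishing theorem.

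\textbf{Abundance in dimension five.} Your second step runs the log MMP on $(Y,D)$ and claims a good minimal model with $K_{Y'}+D'$ semi-ample. BCHM does not give this when $\kappa(Y,D)\ge 0$ but $K_Y+D$ is not big, and log-abundance is open in dimension five. The paper never needs abundance for the ambient pair. Instead it passes to the Iitaka fibration of $i(K_X+D)+L$ (not of $K_X+D$), runs a \emph{relative} MMP over that base, and then invokes log-abundance only on the fibers, which have dimension at most three; this is where the bound $\dim V\le 5$ actually enters. Your diagnosis that the dimension restriction comes from ``availability of good minimal models in the dlt setting'' for $(Y,D)$ itself is therefore off target.

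In short, both of your main steps rely on inputs that are unavailable in this setting; the paper's contribution is precisely to replace them by the pseudo-effective twist $\sB$ and by a relative-MMP-plus-low-dimensional-abundance argument on the fibers of a carefully chosen auxiliary fibration.
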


Throughout this paper $U$ and $V$ are smooth quasi-projective varieties
and $f_U$ has connected fibers.

Once the family $f_U$ arises from a moduli functor with an algebraic coarse moduli scheme,
then an even stronger version of Conjecture~\ref{conj:kk0}, that is due to Campana,
can be verified (see Section~\ref{sect:Section5-Future}). However, the main 
focus of this paper is to study Conjecture~\ref{conj:kk0}, when the family 
$f_U$ is not associated with a well-behaved moduli functor, even after running a 
relative minimal model program. 

The following theorem is the main result of this paper.

\begin{theorem}\label{thm:main}
Conjecture~\ref{conj:kk0} holds when $\dim(V)\leq 5$.
\end{theorem}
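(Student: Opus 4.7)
The strategy I would follow combines Viehweg--Zuo sheaves with low-dimensional minimal model theory and the singular Beauville--Bogomolov decomposition. After replacing $U \to V$ by a suitable birational model, compactify to a projective morphism $f : X \to Y$ with $Y$ smooth, $D := Y \setminus V$ simple normal crossing, and $X$ smooth. The existence of good minimal models for the general fiber of $f$ allows one to invoke the now-standard Viehweg--Zuo construction (in the generality of Popa--Schnell and subsequent refinements) to produce a line bundle $\mathcal L \subseteq \Sym^N \Omega_Y^1(\log D)$ with $\kappa(Y, \mathcal L) \geq \Var(f_U)$. The theorem is then equivalent to the upper bound $\kappa(Y, \mathcal L) \leq \kappa(V)$, which in the case $\kappa(V) = -\infty$ already suffices for conclusion (1), since $\kappa(\mathcal L) = -\infty$ forces $\Var(f_U) < \dim V$.

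For $\kappa(V) \geq 0$, I would reduce to the core case $\kappa(V) = 0$ by passing to the Iitaka fibration of $K_Y$. After birational modifications, this yields a morphism $h : Y \to Z$ with $\dim Z = \kappa(V)$ whose general fiber $F$ is smooth with $\kappa(F) = 0$. Restricting the Viehweg--Zuo sheaf to $F$, the sought bound $\kappa(\mathcal L) \leq \kappa(V)$ is equivalent to showing $\mathcal L|_F$ has Kodaira dimension $\leq 0$, and hence that the restricted family $f_U|_{U \cap f_U^{-1}(F)}$ has variation zero. Because $\dim F \leq \dim V - 1$, this reduction pushes the problem into the case $\kappa = 0$ in dimension at most $5$.

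The heart of the argument is therefore the statement: if $\kappa(V) = 0$ and $\dim V \leq 5$, then $\Var(f_U) = 0$. For this I would run the log MMP on $(Y, D)$ to reach a minimal model $(Y_{\min}, D_{\min})$ with $K_{Y_{\min}} + D_{\min}$ (or $K_{Y_{\min}}$, after the appropriate tailoring of the pair) numerically trivial, and then invoke the Beauville--Bogomolov-type decomposition for singular varieties of vanishing first Chern class (due to Greb--Kebekus--Peternell, Druel--Guenancia, H\"oring--Peternell) to split a quasi-\'etale cover of $Y_{\min}$ into an abelian variety and a product of Calabi--Yau and irreducible holomorphic symplectic factors. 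On each such factor the structure of reflexive (symmetric) differentials precludes a subsheaf of positive Kodaira dimension\,---\,on the abelian part one uses flatness, on the Calabi--Yau and IHS factors one uses stability of $\Omega^{[1]}$ together with vanishing of $c_1$\,---\,so the descended Viehweg--Zuo sheaf must be torsion, forcing $\Var(f_U) = 0$.

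The main obstacle, and the precise reason for the bound $\dim V \leq 5$, is the dual reliance on (i) existence of good minimal models for log canonical pairs with $\kappa \geq 0$ and (ii) the full Beauville--Bogomolov decomposition for the resulting minimal models. Both are unconditionally available in dimension $\leq 5$ (using recent MMP results when $\kappa \geq 0$ in dimension five) but remain conjectural beyond. A secondary technical obstacle is the transfer of $\mathcal L$ through the MMP: divisorial contractions and flips are not morphisms of log smooth pairs, so one must work with the reflexive extension $\Sym^{[N]} \Omega_{Y_{\min}}^{[1]}(\log D_{\min})$ and exploit that $(Y_{\min}, D_{\min})$ has dlt (in particular klt in codimension two) singularities. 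With these ingredients in place, the chain---Viehweg--Zuo, Iitaka reduction, MMP to $c_1 \equiv 0$ model, singular Beauville--Bogomolov---closes exactly in the stated dimensional range.
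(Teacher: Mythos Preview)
Your proposal has a genuine gap at the very first step, and it is precisely the difficulty the paper is written to circumvent. You assert that the Viehweg--Zuo construction ``in the generality of Popa--Schnell'' produces a line bundle $\mathcal L \subseteq \Sym^N \Omega_Y^1(\log D)$ with $\kappa(\mathcal L) \geq \Var(f_U)$. This is not known for families whose general fiber merely admits a good minimal model. In the absence of a well-behaved moduli functor (which is exactly the situation here, since the fibers need not be canonically polarized or have semi-ample canonical bundle), one cannot reduce to a maximal-variation situation over a moduli stack, and the existing constructions only yield the weaker statement
\[
\mathcal L \otimes \mathcal B \;\subseteq\; \Omega_Y^{\otimes i}(\log D), \qquad \kappa(\mathcal L)\geq \Var(f_U),\ \ \mathcal B\ \text{pseudo-effective}.
\]
The pseudo-effective twist $\mathcal B$ cannot be absorbed into $\mathcal L$ without losing control of $\kappa(\mathcal L)$, and your subsequent restriction/Iitaka argument breaks down once $\mathcal B$ is present: restricting $\mathcal L\otimes\mathcal B$ to a fiber of the Iitaka map of $K_Y+D$ tells you nothing about $\kappa(\mathcal L|_F)$.

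Even if one grants your Viehweg--Zuo sheaf, the second half of the argument also has a gap. To reach a model with $K_{Y_{\min}}+D_{\min}\equiv 0$ you need log-abundance for klt (or dlt) pairs with $\kappa=0$, and this is \emph{not} known in dimension~$5$; the singular Beauville--Bogomolov theorem is therefore not available to you. The paper's route is entirely different and is designed so that the only abundance input required is log-abundance in dimension~$3$. Concretely, from the twisted inclusion above one first uses Campana--P\u{a}un to deduce that $i(K_Y+D)-L$ is pseudo-effective, then takes the Iitaka fibration of the \emph{sum} $i(K_Y+D)+L$ (not of $K_Y+D$). After a further birational massage one arranges that $L$ is pulled back from the base $Z$ of this fibration and that $(K_Y+D)|_F$ has $\kappa=0$ on the general fiber~$F$. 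One then runs a \emph{relative} log MMP over $Z$; when $\dim Y\le 5$ the fibers have dimension at most~$3$, so Keel--Matsuki--McKernan's log-abundance makes $(K_{Y_n}+D_{Y_n})|_{F}\equiv 0$, whence $K_{Y_n}+D_{Y_n}$ is pulled back from a flattening of~$Z$. Comparing Kodaira dimensions on the base then gives $\kappa(L)\le\kappa(K_Y+D)$. This is where the bound $\dim V\le 5$ actually enters, not via a global Beauville--Bogomolov decomposition.
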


When dimension of the base and fibers are equal to one, Viehweg's 
hyperbolicity conjecture (or equivalently Conjecture~\ref{conj:kk0}) was proved by Parshin
\cite{Parshin68}, in the compact case,
and in general by Arakelov \cite{Arakelov71}. For higher dimensional fibers and assuming 
that $\dim(V)=1$, this conjecture was confirmed by Kov\'acs~\cite{Kovacs00a}, in the canonically polarized case
(see also~\cite{Kovacs02}), 
and by Viehweg and Zuo~\cite{Vie-Zuo01} in general.
Over Abelian varieties Viehweg's conjecture was solved by Kov\'acs~\cite{Kovacs97c}.
When $\dim(V)=2$ or $3$, it was resolved by Kebekus and Kov\'acs, cf.~\cite{KK08} and \cite{KK10}.
In the compact case it was settled by Patakfalvi~\cite{MR2871152}.
Viehweg's conjecture was finally solved in complete generality by the fundamental work of 
Campana and P\u{a}un \cite{CP16} and more recently by Popa and 
Schnell \cite{PS15}. 
For the more analytic counterparts of these results please see~\cite{Vie-Zuo03a},
\cite{Sch12}, \cite{TY15}, \cite{BPW}, \cite{TY16}, \cite{PTW} and \cite{Deng}.

By using the solution of Viehweg's hyperbolicity conjecture, one can reformulate  
Conjecture~\ref{conj:kk0} as follows.

\begin{conjecture}[Kebekus-Kov\'acs Hyperbolicity Conjecture.~II]\label{conj:kk}
Let $f_U : U \to V$ be a smooth family of projective varieties and $(X,D)$ a smooth 
compactification of $V$ with $V \cong X\smallsetminus D$. Assume that 
the general fiber of $f_U$ has a good minimal model.
Then, the inequality 

$$
\Var(f_U) \leq \kappa(X, D)
$$
holds, if $\kappa(X,D)\geq  0$.
\end{conjecture}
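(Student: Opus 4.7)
The plan is to verify that Conjecture~\ref{conj:kk} is the reformulation of Conjecture~\ref{conj:kk0} obtained from the now-resolved Viehweg Hyperbolicity Conjecture, as the sentence preceding the statement asserts. This is what can be extracted in \emph{arbitrary dimension} at this stage; the full $\kappa(X,D)$-bound remains open in general and is what the paper will address under the additional hypothesis $\dim V\leq 5$. Concretely, the task is to prove the logical equivalence of Conjectures~\ref{conj:kk0} and~\ref{conj:kk}.

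First, I would record the definitional identification $\kappa(V)=\kappa(X,K_X+D)=\kappa(X,D)$, independent of the chosen smooth compactification. Consequently the hypothesis $\kappa(X,D)\geq 0$ in Conjecture~\ref{conj:kk} coincides literally with the condition $\kappa(V)\geq 0$ governing case~(2) of Conjecture~\ref{conj:kk0}.

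The implication Conjecture~\ref{conj:kk0} $\Rightarrow$ Conjecture~\ref{conj:kk} is then immediate: $\kappa(X,D)\geq 0$ excludes case~(1), and case~(2) yields $\Var(f_U)\leq \kappa(V)=\kappa(X,D)$. For the converse, case~(2) of Conjecture~\ref{conj:kk0} is itself immediate from Conjecture~\ref{conj:kk}. The substantive step is case~(1): assuming $\kappa(V)=-\infty$, one must establish $\Var(f_U)<\dim V$, and Conjecture~\ref{conj:kk} by itself says nothing in this regime. Here the input is the general form of the Viehweg Hyperbolicity Conjecture for fibers admitting good minimal models --- a theorem of Campana--P\u{a}un~\cite{CP16} and Popa--Schnell~\cite{PS15} --- which asserts that $\Var(f_U)=\dim V$ forces $(X,D)$ to be of log general type, i.e.\ $\kappa(X,D)=\dim V$. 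Contrapositively, $\kappa(V)=-\infty$ rules out maximal variation and delivers case~(1).

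The main (and in fact only) obstacle is confirming that the version of Viehweg's theorem being invoked really applies in the good-minimal-model setting under the smoothness hypothesis placed on $f_U$; once that is granted, the reformulation is a purely formal rearrangement valid in every dimension, requiring no MMP or abundance input.
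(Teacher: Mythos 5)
Your proposal correctly recognizes that Conjecture~\ref{conj:kk} is stated without proof and that the only verifiable claim here is its equivalence with Conjecture~\ref{conj:kk0} granted the resolved Viehweg Hyperbolicity Conjecture --- precisely what the sentence preceding the statement asserts. Your argument, using $\kappa(V)=\kappa(X,D)$ for the forward direction and the contrapositive of the Campana--P\u{a}un / Popa--Schnell theorem to recover case~(1) of Conjecture~\ref{conj:kk0} in the converse, is correct and coincides with the paper's implicit reasoning.
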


When $f_U$ is canonically polarized, Conjecture~\ref{conj:kk} was confirmed 
by Kebekus and Kov\'acs in \cite{KK08}, assuming that $\dim(X)=2$, in~\cite{KK10}, 
if $\dim(X)=3$, and as a consequence of \cite{taji16} in full generality. The latter result 
establishes an independent, but closely related, conjecture 
of Campana; the so-called \emph{Isotriviality Conjecture}.

Campana's conjecture (Conjecture~\ref{conj:iso}) predicts that 
once the fibers of $f_U$ are canonically polarized (or more generally have 
semi-ample canonical bundle, assuming that the family is polarized with a fixed Hilbert polynomial
in the sense of Viehweg \cite[Thm.~1.13]{Viehweg95}), 
then $f_U$ is isotrivial, 
if $V$ is \emph{special}.
We refer to Section~\ref{sect:Section5-Future} for more details on the notion of
special varieties
and various other particular cases where the Isotriviality Conjecture and consequently 
Conjecture~\ref{conj:kk} can be confirmed, providing further evidence for the
importance of the theory of special varieties. 
Briefly, the key point is that
whenever one can prove that smooth projective families of fixed Kodaira dimension and admitting 
good minimal models are isotrivial over special varieties, then 
the inequality in Conjecture~\ref{conj:kk} is valid for such families.

\subsection{Brief review of the strategy of the proof}
A key tool in proving Conjecture~\ref{conj:kk}, in the canonically polarized case,
is the celebrated result of Viehweg and Zuo \cite[Thm.~1.4.(i)]{Vie-Zuo01} on the existence of 
an invertible subsheaf $\sL\subseteq (\Omega^1_X(\log D))^{\otimes i}$, for some $i\in \bN$, whose Kodaira dimension verifies
the inequality: 

\begin{equation}\label{eq:VZ-ineq}
 \kappa(X, \sL) \geq \Var(f_U).
\end{equation}
The sheaf $\sL$ is usually referred to as a \emph{Viehweg-Zuo subsheaf}.
In general, once the canonically polarized condition is dropped, in the 
absence of a well-behaved moduli functor associated to the family, 
the approach of \cite{Vie-Zuo01} cannot be directly applied.
Nevertheless, we show that one can still construct a subsheaf of $(\Omega^1_X(\log D))^{\otimes i}$
arising from the variation in $f_U$, as long as the general fiber of 
$f_U$ has a good minimal model. But in this more general context 
the sheaf $\sL$ injects into $(\Omega^1_X(\log D))^{\otimes i}$ only after it is twisted by some pseudo-effective 
line bundle $\sB$. As such we can no longer 
guarantee that this (twisted) subsheaf verifies the inequality (\ref{eq:VZ-ineq}).

\medskip

\begin{theorem}[Existence of pseudo-effective Viehweg-Zuo subsheaves]\label{thm:VZ}
Let $f_U: U\to V$ be a smooth non-isotrivial family whose general fiber admits 
a good minimal model. Let $(X, D)$ a smooth compactification of $V$.
There exist a positive integer $i$, a line bundle $\sL$ on $X$, 
with $\kappa(X, \sL) \geq \Var(f_U)$, and a pseudo-effective 
line bundle $\sB$ with an inclusion 

\begin{equation}\label{inclusion}
\sL \otimes \sB  \subseteq  (\Omega^1_X(\log D))^{\otimes i}.
\end{equation}

\end{theorem}

To mark the difference between the two cases 
we refer to these newly constructed sheaves as
\emph{pseudo-effective Viehweg-Zuo subsheaves}.

\medskip

\subsubsection{Hodge theoretic constructions.} Assuming that $\Var(f_U) = \dim(X)$, 
the line bundle in Theorem~\ref{thm:VZ} is then big and we have
$\kappa(X, \sL\otimes \sB) = \dim(X)$. Thus, in this case, Theorem~\ref{thm:VZ}
coincides with the result of Viehweg-Zuo \cite[Thm.~1.4.(i)]{VZ02}, when 
fibers are canonically polarized, and the theorem of Popa-Schnell \cite[Thm.~B]{PS15}
in the case of more general fibers. 
Although the general strategy in this paper 
is similar to~\cite{VZ02}, there are some conceptual differences,
which I believe make the proof (including in the maximal variation case) more 
accessible. Let us now sketch the key points in this construction.

Let $f: Y\to X$ be a smooth compactification of $f_U$ and for simplicity 
let us assume that it is semistable in codimension one. As in the case
of~\cite{VZ02} and~\cite{PS15}, the key first step, is to use the fact that 
the line bundle $\det(f_* \omega_{Y/X}^{a})^{**}$ is big, for some $a\in \bN$,
cf.~\cite{Kawamata85},
to construct a smooth projective variety $Z$ and  
a generically finite map $\pi: Z\to Y$ arising from the global sections of $\big(\omega_{Y/X} \otimes f^* \sL^{-1}\big)^m$, 
with $\sL$ being a big line bundle and $m$ sufficiently large.
We note that this construction is only valid 
after the initial family is replaced by a   
large fiber product $\underbrace{Y \times_X \ldots \times_X Y}_{r}$, for sufficiently large $r$.

At this point we use the geometric data above, which is a consequence of our assumptions 
on the variation in the family, to construct a subsystem 

$$
(\sG, \theta) \subseteq (\sE, \theta),
$$
where $(\sE = \bigoplus \sE_{i}, \theta)$ is the graded Higgs bundle underlying the variation of Hodge 
structures of the family $g =  f\circ \pi: Z\to X$.
The system $(\sG, \theta)$ verifies the property
that $\theta(\sG)  \subset \Omega^1_X(\log D) \otimes \sG$. Furthermore, the first 
graded piece $\sG_0$ of $\sG$ admits a non-trivial morphism $\sL \longrightarrow \sG_0$.
Existence of such $(\sG, \theta)$ then leads to the inclusion (\ref{inclusion})
in Theorem~\ref{thm:VZ}, thanks to negativity of Hodge metrics along the kernel of 
$\theta|_{\sE_{i}}$ (see Section \ref{sect:Section3-VZ} for the details and references).

The construction of $(\sG, \theta)$ is where the two approaches 
of~\cite{VZ02} and~\cite{PS15} differ. 
While the former relies on strong positivity results for direct image sheaves
for families of manifolds with semiample canonical bundle, the latter  
resorts to Hodge modules and the decomposition theorem.
To a large extent, the approach in this paper follows the original general strategy of~\cite{VZ02}
but is purely based on natural functorial properties of 
de Rham complexes.
This is the content of Sections~\ref{sect:Section2-Hodge} and~\ref{sect:Section3-VZ}, 
and in the more general 
setting of non-isotrivial families (instead of just the ones with maximal variation).

It is worth pointing out that once there is an algebraic moduli functor (for example
in the canonically polarized case), then the line bundle $\sB$ in Theorem~\ref{thm:VZ}
is trivial. The reason that in general $\sB$ cannot be assumed to be trivial is that
without a reasonable functor 
associated to the family, the existence of Viehweg-Zuo subsheaves 
can no longer be extracted from their construction at the level of moduli stacks,
where the variation is maximal. We refer the reader to Section~\ref{sect:Section5-Future}
for more details and a brief review of the cases where this difficulty can be overcome.

\subsubsection{Birational methods and vanishing results.}
Once Theorem~\ref{thm:VZ} is established, the only issues that remains is to trace a connection 
between $\kappa(\sL)$ and $\kappa(X,D)$.
In the maximal variation case, Viehweg-Zuo subsheaves 
are guaranteed to be big (as soon as the general fiber has 
a good minimal model). In this case a key result of Campana and P\u{a}un 
then implies that $\kappa(X,D) = \dim(X)$, cf.~\cite[Thm~7.11]{CP16}.
But when $\Var(f_U)<\dim(X)$, as $\sL$ is not big, this strategy can no longer 
be applied. Instead, our proof of Theorem~\ref{thm:main} relies on the following vanishing 
result.

\medskip

\begin{theorem}[Vanishing for twisted logarithmic pluri-differential forms]
\label{thm:vanishing}
Let $(X,D)$ be a pair consisting of a smooth projective variety $X$ of dimension
at most equal to $5$ and a reduced effective 
divisor $D$ with simple normal crossing support. 
If $\kappa(X, D)\geq 0$, 
then, for any $i\in \bN$, the equality 

\begin{equation*}\label{eq:CamVanishing}
H^0 \big(X, (\Omega^1_X(\log D))^{\otimes i} \otimes  (\sL\otimes \sB)^{-1} \big) = 0
\end{equation*}
holds, assuming that $\sB$ is pseudo-effective and $\kappa(\sL) >  \kappa(X,D)$.
\end{theorem}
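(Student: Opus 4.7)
The proof proceeds by contradiction. Assume that $H^0\bigl(X, \Omega_X^{\otimes i}\log(D) \otimes (\sL\otimes \sB)^{-1}\bigr) \neq 0$, so that there is a non-zero map
\[
\iota\colon \sL \otimes \sB \hookrightarrow \Omega_X^{\otimes i}\log(D).
\]
The first step is to pass to a log minimal model of $(X,D)$. Since $\kappa(X,D) \geq 0$ and $\dim X \leq 5$, the existence of a log minimal model $(X_m, D_m)$ with $K_{X_m}+D_m$ nef is provided by the log MMP in this dimension range. The induced birational map $X \dashrightarrow X_m$ is an isomorphism in codimension one, and so $\iota$ descends to a reflexive inclusion of sheaves on $X_m$; the birational transforms continue to satisfy $\kappa(\sL_m) > \kappa(X_m,D_m)$ while $\sB_m$ remains pseudo-effective.

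Next I would invoke log abundance in the range $\dim X_m \leq 5$ to produce a morphism $\pi\colon X_m \to Y$ with $\dim Y = \kappa(X,D) =: k$ and $K_{X_m}+D_m \sim_{\bQ} \pi^* A$ for some ample $\bQ$-divisor $A$ on $Y$. Restricting to a general smooth fiber $F$, the pair $(F, D_m|_F)$ is log Calabi-Yau, i.e., $K_F + D_m|_F \sim_{\bQ} 0$, and one obtains an inclusion
\[
\sL_m|_F \otimes \sB_m|_F \hookrightarrow \bigl(\Omega_F^1 \log (D_m|_F)\bigr)^{\otimes i}.
\]
From the easy additivity inequality for the Kodaira dimension one has $\kappa(\sL_m|_F) \geq \kappa(\sL_m) - k > 0$.

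On this log Calabi-Yau fiber I would apply the logarithmic Bogomolov-Sommese type theorem of Campana-P\u{a}un, which yields $\kappa\bigl(\sL_m|_F \otimes \sB_m|_F\bigr) \leq \kappa(F, K_F + D_m|_F) = 0$. The delicate step is to pass from this bound on the twisted bundle to the bound $\kappa(\sL_m|_F) \leq 0$, which would contradict $\kappa(\sL_m|_F)>0$. For this I would appeal to the structure theory of log Calabi-Yau pairs available in low dimension: a finite cover of $F$ decomposes, in a way adapted to the boundary, into abelian, strict Calabi-Yau, and irreducible holomorphic symplectic building blocks, and on such pairs every pseudo-effective line bundle is $\bQ$-linearly equivalent to an effective one. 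Consequently $\kappa(\sL_m|_F) \leq \kappa(\sL_m|_F \otimes \sB_m|_F) \leq 0$, the desired contradiction.

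The main obstacle, and the reason the dimension hypothesis $\dim X \leq 5$ enters, is precisely the absorption of the pseudo-effective twist $\sB$. The Campana-P\u{a}un inequality controls the twisted bundle but not $\sL$ alone; the reduction to $\sL$ requires both log abundance on $(X_m,D_m)$ and a structure theorem for the log Calabi-Yau fiber $(F, D_m|_F)$ ensuring pseudo-effective line bundles have non-negative Kodaira dimension. Both ingredients are presently available only in low dimension, and outside this range new techniques would be needed to rule out pseudo-effective subsheaves of the log cotangent bundle that are not themselves effective.
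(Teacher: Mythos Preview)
Your outline contains several genuine gaps that prevent it from going through.

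\textbf{Log abundance in dimension five is not available.} You invoke log abundance for $(X_m,D_m)$ with $\dim X_m\le 5$ to produce the Iitaka fibration with $K_{X_m}+D_m$ pulled back from an ample class. Log abundance is only established up to dimension three (Keel--Matsuki--McKernan), with partial results in dimension four; in dimension five it is open. The paper avoids this by taking the Iitaka fibration of $i(K_X+D)+L$ rather than of $K_X+D$: since we may assume $\kappa\bigl(i(K_X+D)+L\bigr)\ge 2$ (the case $=1$ being handled directly), the fibres of the resulting map have dimension at most $3$, and the paper runs a \emph{relative} log MMP over this base so that only log abundance for threefolds is required.

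\textbf{Pseudo-effective does not imply $\kappa\ge 0$ on log Calabi--Yau fibres.} Your final step asserts that on the building blocks of a log Calabi--Yau decomposition ``every pseudo-effective line bundle is $\bQ$-linearly equivalent to an effective one''. This is false already for abelian varieties: a numerically trivial, non-torsion line bundle on an elliptic curve is pseudo-effective with $\kappa=-\infty$. Hence the passage from $\kappa(\sL_m|_F\otimes\sB_m|_F)\le 0$ to $\kappa(\sL_m|_F)\le 0$ is unjustified. The paper never attempts to absorb $\sB$ into $\sL$ on a fibre; instead it uses Campana--P\u{a}un once globally to deduce that $i(K_X+D)-L$ is pseudo-effective, and then works entirely with the two divisors $K+D$ and $L$ (no residual $\sB$) via the flattening lemma to descend $K+D$ to a big class on the base of the fibration.

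\textbf{The MMP map is not an isomorphism in codimension one.} Divisorial contractions occur, so your claim that $\iota$ ``descends to a reflexive inclusion of sheaves on $X_m$'' needs care: $X_m$ is typically singular and $\Omega_{X_m}$ is not immediately meaningful. One must work on a common resolution and track exceptional divisors, as the paper does in its Claim~4.2 and Claims~4.4--4.5.
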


Theorem~\ref{thm:main} is now an immediate consequence of Theorems~\ref{thm:vanishing}
and~\ref{thm:VZ}.

In the presence of a flat K\"ahler-Einstein metric, or assuming that 
the main conjectures of the minimal model program hold,
various analogues of Theorem~\ref{thm:vanishing} can be verified. 
When $D = 0$ and $c_1(X)=0$, and $\sB\cong \sO_X$,  
Theorem~\ref{thm:vanishing} follows, in all dimensions, directly from 
Yau's solution to Calabi's conjecture \cite{MR0451180} and the Bochner formula. 
When $\sB\cong \sO_X$ and $D=0$, the vanishing in Theorem~\ref{thm:vanishing} 
was conjectured by Campana in \cite{Ca95} where he proved that it 
holds for an $n$-dimensional variety $X$, if the Abundance Conjecture 
holds in dimension $n$. 
As was shown by Campana, such vanishing results are closely 
related to compactness properties of
the universal cover 
of algebraic varieties.
We refer to \cite{Ca95} for details of this very interesting subject
(see also the book of Koll\'ar~\cite{Kollar95s} and \cite{Kollar93}).
We also invite the reader to consult \cite[Thm.~7.3]{CP16}
where the authors successfully deal with a similar problem with $\kappa$ replaced by another invariant
$\nu$; the latter being the numerical Kodaira dimension.

\subsection{Structure of the paper}
In Section~\ref{sect:Section2-Hodge} we provide the preliminary 
constructions needed for the proof of Theorem~\ref{thm:VZ}. The proof 
of Theorem~\ref{thm:VZ} appears in Section~\ref{sect:Section3-VZ}.
In Section~\ref{sect:Section4-Birational} we prove the vanishing result; 
Theorem~\ref{thm:vanishing}. Section~\ref{sect:Section5-Future} is devoted 
to further results and related problems, including a discussion on the connection 
between Conjecture~\ref{conj:kk} and a conjecture of Campana.

\subsection{Acknowledgements} I would like to thank S\'andor Kov\'acs, Fr\'ed\'eric Campana,
Mihnea Popa, Christian Schnell and Yajnaseni Dutta for their interest and suggestions. I am also grateful to 
the anonymous referee for helpful comments.

\section{Preliminary results and constructions}
\label{sect:Section2-Hodge}
Our aim in this section is to establish two key background results that we will
need in order to construct the pseudo-effective Viehweg-Zuo subsheaves in the proceeding section.

\subsection{Positivity of direct images of relative dualizing sheaves}
In~\cite[Thm~1.1]{Kawamata85} Kawamata shows that, assuming that the general fiber admits 
a good minimal model, for any algebraic fiber space $f: Y\to X$ of smooth
projective varieties $Y$ and $X$, the inequality 

\begin{equation}\label{eq:KAW}
\kappa\Big(X,  \big( \det(f_*\omega^m_{Y/X}) \big)^{**}  \Big) \geq \Var(f),
\end{equation}
holds, for all sufficiently large integers $m\geq 1$\footnote{In~\cite{Kawamata85} this result has been 
established when geometric generic fiber (see p.~5, \emph{loc.cit.}) has a good minimal model.
Using the openness property of good minimal models, one sees that this condition is 
satisfied when the general fiber has a good minimal model.}.

One can use (\ref{eq:KAW}) to extract positivity results for $f_*\omega^m_{Y/X}$.
We refer the reader to~\cite{Vie-Zuo03a}, \cite{VZ02} and \cite{PS15} for the case where $\Var(f) = \dim(X)$ 
(see also~\cite{Kawamata85} and~\cite{Viehweg83}). The key ingredient is the fiber product trick 
of Viehweg, where, given $f: Y\to X$ as above, one considers the $r$-fold fiber product 

$$
Y^r :  =  \underbrace{Y\times_X  Y  \times_X \ldots \times_X Y}_{\text{$r$ times}}.
$$
We denote by $Y^{(r)}$ a desingularization of $Y^{r}$ and the resulting morphism 
by $f^{(r)}:  Y^{(r)}  \to X$.

By standard semistable reduction results (see for example~\cite[Prop.~6.1]{Viehweg83}) 
there is a finite, flat and Galois morphism $\gamma: X_1 \to X$, 
with $G:= \Gal(X_1/ X)$, 
such that 
the induced morphism $f_1: Y_1 \to X_1$ from a $G$-equivariant resolution $Y_1$ of the fiber product 
$Y \times_{X_1} X$ 
is semistable in codimension one:

$$
\xymatrix{
Y_1 \ar[r]  \ar[dr]_{f_1}   &   Y \times_{X_1}  X  \ar[d] \ar[rr]  && Y \ar[d]^f  \\
&                                        X_1  \ar[rr]^{\gamma}_{\text{flat and Galois}}  &&        X.
}
$$
By repeating this construction we find the commutative diagram

$$
\xymatrix{
Y_1^{(r)}  \ar[rr] \ar[d]_{f_1^{(r)}} &&  Y^{(r)} \ar[d]^{f^{(r)}} \\
 X_1  \ar[rr]^{\gamma}    && X. 
}
$$

The next proposition is an extension of the arguments of 
\cite[pp.~708--709]{PS15} to the case where variation is not maximal.

\begin{proposition}\label{prop:positivity}
Let $f: Y\to X$ be a fiber space of smooth projective varieties $Y$ and $X$. If the general 
 fiber admits a good minimal model, then, for every sufficiently large 
$m\geq 1$, there exists $r:= r(m)\in \bN$, a line bundle $\sL$ on $X$, with 
$\kappa(X, \sL)\geq \Var(f)$ and a line bundle $\sL_1$ on $X_1$, where $\kappa(X_1, \sL_1) \geq \Var(f)$, 
verifying the following properties. 

\begin{enumerate}
\item \label{eq:include} Up to isomorphism, and over the locus where
$f^{(r)}_*\big( \omega^m_{Y^{(r)}/ X} \big)$ is reflexive, there are inclusions 
$$
\sL^{m} \subseteq f^{(r)}_*\big( \omega^m_{Y^{(r)}/ X} \big)  \; \; \; and \; \; \;  
   \sL_1^m \subseteq (f_1^{(r)})_* \big( \omega^m_{Y_1^{(r)}/ X_1} \big).
$$

\item \label{item:down1} There is an isomorphism $\sL_1|_{X_1\smallsetminus \disc(f_1)}  \cong  (\gamma^*\sL)|_{X_1\smallsetminus \disc(f_1)}$.
\item \label{item:down2} There is a line bundle $\sA$ on $X$ such that for any sufficiently large 
and divisible integer $d$ we have 
$$
\sL_1^d \cong  \gamma^* \sA.
$$
\end{enumerate}

\end{proposition}

\begin{proof}
Let us define the line bundle $\sL_1$ by 

$$
\sL_1: = \det \big(  (f_1)_*   \omega^{m}_{Y_1/ X_1}   \big).
$$
We can allow ourselves to remove codimension two subsets from $X$. In particular we may 
assume that torsion free sheaves (such as $\sL_1$) are locally free. 

Next, we observe that, as $\Omega^1_{X_1}$ and $\Omega^1_{Y_1}$ are naturally equipped with the structure 
of $G$-sheaves (or linearized sheaves), $\omega^m_{Y_1/X_1}$ is also a $G$-sheaf. It follows that 
$(f_1)_*\omega^m_{Y_1/X_1}$ and its determinant are also $G$-sheaves. 

Now, let $t: = \rk \big( (f_1)_* \omega^m_{Y_1/X_1}   \big)$ and consider the 
natural injection 

$$
\sL_1  \hooklongrightarrow   \bigotimes^{t}  (f_1)_*  \omega^m_{Y_1/X_1}. 
$$
Using the fact that $f_1$ is semistable in codimension one
(and remembering that we are arguing in codimension one), thanks to~\cite[Lem.~3.5]{Viehweg83}, we have

$$
(f_1^{(t)})_*\big( \omega^m_{Y_1^{(t)}/ X_1}  \big)   =   \bigotimes^t  (f_1)_* \big( \omega^m_{Y_1/X_1} \big).
$$
On the other hand, since $\gamma$ is flat, according to~\cite[Lem.~3.2]{Viehweg83}, we have 

$$
(f_1^{(t)})_*  \big( \omega^m_{Y_1^{(t)} / X_1} \big) \subseteq \gamma^* \big(  f^{(t)}_*  \omega^m_{Y^{(t)}/ X}  \big).
$$

Now, let $\sB_1$ be the saturation of the image of $\sL_1$ in $\gamma^* \big(  f^{(t)}_*  \omega^m_{Y^{(t)}/ X}  \big)$.
As the saturation of a $G$-sheaf inside $\gamma^* \big(  f^{(t)}_*  \omega^m_{Y^{(t)}/ X}  \big)$, the line 
bundle $\sB_1$ is again a $G$-sheaf. Therefore, thanks to~\cite[Thm.~4.2.15]{MR2665168}
descends, that is there exists a line bundle $\sL$ on $Y$ such that $\gamma^*\sL \cong \sB_1$.
Moreover as $\gamma$ is finite, it follows that $\kappa(X_1, \sB_1) = \kappa(X, \sL)$.
Note that we also have $\kappa(\sB_1) \geq \kappa(\sL_1) \geq \Var(f)$, 
where the latter inequality holds again by using~\cite[Thm~1.1]{Kawamata85}.

Furthermore, by flat base change, the two line bundles $\sL_1$ and $\sB_1$ are isomorphic 
away from $\disc(f_1)$ and thus so are $\sL_1$ and $\gamma^*\sL$.
Now, as $\sL_1$ is a $G$-sheaf, by using the structure of $\gamma$ and the fact that 
$\sB$ and $\sL_1$ differ only over $\gamma^{-1}\disc(f)$, 
we can also see that there is a line bundle $\sA$ on $X$ such that 
$\sL_1^d \cong \gamma^*\sA$, for sufficiently divisible integer $d$.

Our aim is now to show that $\sL$ and $\sL_1$ admit the required injections~\ref{eq:include}.
To this end, we consider the injections  

$$
\sL_1^m \subseteq  \sB_1^m   \hooklongrightarrow  \bigotimes^{tm}  (f_1)_* \omega^m_{Y_1/X_1}.
$$
We set $r:= tm$. Again, using the same arguments as above, 
we find 

$$
\sL_1^m  \subseteq  \sB_1^m  \subseteq  (f_1^{(r)})_* \omega^m_{Y_1^{(r)}/ X_1},
$$
verifying the inclusion \ref{eq:include} for $\sL_1$.
On the other hand, as before we have 

$$
\big( f_1^{(r)} \big)_*  \omega^m_{Y_1^{(r)}/X_1}  \subseteq  
    \gamma^* \big( f^{(r)}_* \omega^m_{Y^{(r)}/X} \big)
$$
so that 
$$
\gamma^* (\sL^m) \cong \sB_1^m  \hooklongrightarrow \gamma^* \big( f_*^{(r)} \omega^m_{Y^{(r)}/X}  \big).
$$
By applying the $G$-invariant section functor $\gamma_*(\cdot)^G$
to both sides we the required injection 

$$
\sL^m \hooklongrightarrow  f_*^{(r)}  \omega^m_{Y^{(r)}/X}.
$$
\end{proof}

\medskip

\subsection{Hodge theoretic constructions}\label{subsect:Hodge}
Let $f: Y \to X$ be a surjective, projective morphism of smooth quasi-projective varieties 
$Y$ and $X$ of relative dimension $n$ and let $D=\disc(f)$ be the discriminant locus of $f$. Set $\Delta: = f^*(D)$
and assume that the support of $D$ and $\Delta$ is simple normal 
crossing. 

\begin{definition}[Systems with $\sW$-valued operators]
Let $\sW$ be a coherent sheaf on $X$. We call the graded torsion free sheaf
$\sF = \bigoplus \sF_{i}$ a system with $\sW$-valued operator, if it can be 
equipped with a sheaf homomorphism $\tau: \sF \to \sW \otimes \sF$
satisfying the Griffiths transversality condition 
$\tau|_{\sF_{i}} : \sF_{i}  \to \sW \otimes  \sF_{i+1}$.
\end{definition}

Throughout the rest of this section we will allow ourselves to 
discard closed subsets of $X$ of $\codim_X\geq 2$, whenever necessary.

Our goal is to construct a system $(\sF, \tau)$ with $\Omega^1_X(\log D)$-operator $\tau$
which can be equipped with compatible maps to a system of Hodge bundles underlying 
the variation of Hodge structures of a second family that arises from $f$ via certain covering 
constructions.

To this end, let $\sM$ be a line bundle on $Y$ with $H^0(Y, \sM^m) \neq 0$.
Let $\psi: Z\to Y$ be a desingularization of the finite cyclic covering 
associated to taking roots of a non-zero section $s\in H^0(Y, \sM^m)$
so that 

\begin{equation}\label{eq:NV}
H^0(Z, \psi^*\sM)\neq 0,
\end{equation}
cf.~\cite[Prop.~4.1.6]{Laz04-I}. 

Now, consider the exact sequence of relative differential forms

\begin{equation}\label{eq:exact}
\xymatrix{
0  \ar[r]  &  f^*(\Omega^1_X (\log D))  \ar[r]   & \Omega^1_Y (\log \Delta )  \ar[r]  &  \Omega^1_{Y/X}(\log\Delta) \ar[r]  &  0,
}
\end{equation}
which is locally free over $X$ (after removing a codimension two subset of $X$).
Define $h:= f\circ \psi$.

The bundle $\Omega^{p}_Y\log(\Delta)$
can be filtered by a decreasing filtration $F^i$, $i\geq 0$, with 
$F^i /F^{i+1} \cong \Omega_{Y/X}^{p -i}(\log\Delta)) \otimes f^*(\Omega^i_X(\log D) $.
After taking the quotient by $F^2$, the exact sequence 
$0\to F_1 \to F_0 \to F^0 / F^1 \cong   \Omega^{p}_{Y/ X}(\log\Delta) \to 0$
reads as

\begin{equation}\label{eq:long}
\xymatrix{
0 \ar[r] &  f^*( \Omega^1_X(\log D)) \otimes \Omega_{Y/X}^{p -1} \log(\Delta)  \ar[r]  & F^0/F^2  \ar[r] &  \Omega^{p}_{Y/X} (\log\Delta)  \ar[r]  &    0,
}
\end{equation}
where $( \Omega_{Y/X}^{p -1} (\log\Delta)   \otimes f^*( \Omega^1_X(\log D))\cong F^1/F^2$
and 

\begin{equation}\label{eq:1st}
F^0/F^2  \cong  \Omega^{p}_Y (\log\Delta)/ \big( f^* \Omega^2_X(\log D) \otimes 
\Omega^{p-2}_{Y/X}(\log\Delta) \big).
\end{equation}

We tensor the sequence (\ref{eq:long}) by $\sM^{-1}$ to get the 
exact sequence

\begin{equation*}
\begin{multlined}
A_{p}: \; \; 0 \to   f^*( \Omega^1_X(\log D))\otimes \Omega_{Y/X}^{p -1} \otimes \sM^{-1}   \to 
(F^0/F^2)\otimes \sM^{-1} \to    \\
\;  \; \; \; \; \; \; \; \; \; \; \; \; \; \; \; \; \; \longrightarrow \Omega^{p}_{Y/X} \log(\Delta) \otimes \sM^{-1} \to 0.
\end{multlined}
\end{equation*}
Now, define a $\Omega^1_X(\log D)$-valued system $(\sF = \bigoplus \sF_{i}, \tau)$ of weight $n$ on $X$ by

\begin{equation}\label{eq:Fsystem}
\sF_{i} : =  \R^{i} f_* \big( \Omega^{n-i}_{Y/X}  (\log\Delta)  \otimes \sM^{-1}  \big) / {\text{torsion}},
\end{equation}
whose map $\tau|_{\sF_{i}}: \sF_{i} \to \Omega^1_X(\log D)\otimes  \sF_{i+1}$ is determined by
the corresponding connecting map in the long exact cohomology sequence arising from $\dR f_{*}(A_{n-i})$
\footnote{With $\tau$ being defined by connecting 
homomorphisms, $(\sF, \tau)$ is not necessarily a Higgs bundle.}, where $\dR f_{*}(A_{n-i})$
is a distinguished triangle in the bounded derived category of coherent sheaves.

Next, let $\disc(h) = (D+S)$. By removing a subset of $X$ of $\codim_X\geq 2$
we may assume that $(D+S)$ has simple normal crossing support. We further assume  
that, after replacing $Z$ by a higher birational model, if necessary, the divisor $h^*(D+S)$ has simple normal crossing support, 
which
we denote by $\Delta'$. 

Similar to the above construction, we can consider the exact sequence 

$$
B_{p} : \; \;    h^*\big(\Omega_X^1(\log(D+S) ) \big)  \otimes \Omega^{p -1}_{Z/X} ( \log\Delta')  
  \to G^0/{G^2}  \to    \Omega^{p}_{Z/X}(\log\Delta') \to 0,
$$
with $G^i$ being the decreasing filtration of $\Omega^p_Z(\log \Delta')$ associated to the sequence 

$$
\xymatrix{
0 \ar[r] &  h^*\Omega^1_X (\log(D+S)) \ar[r]  & \Omega^1_Z(\log \Delta') \ar[r] &  \Omega^1_{Z/X} (\log \Delta') \ar[r]  &  0.
}
$$
In this case we have 

\begin{equation}\label{eq:2nd}
G^0 / G^2  \cong \Omega_Z^{p}(\log \Delta')  /  \big(  h^*\Omega^2_X\log(D+S)  \otimes \Omega_{Z/X}^{p-2}
(\log\Delta')    \big).
\end{equation}

Now, let $(\sE, \theta)$ with $\theta: \sE \to \sE \otimes \Omega^1_X (\log(D+S))$ 
be the logarithmic Higgs bundles
underlying the canonical extension of the local system 
$\R^n h_*(\bC_{Z\smallsetminus \Delta'})$, whose residue has eigenvalues contained in $[0,1)$, 
cf.~\cite[Prop.~I.5.4]{Deligne70}. According to Steenbrink~\cite[Thm.~2.18]{Ste76} and Katz-Oda~\cite{KO68}
the associated graded module of the Hodge filtration induces  
the structure of a system of Hodge bundles on $(\sE, \theta)$ whose graded pieces $\sE_{i}$ are given by 

\begin{equation}\label{eq:Esystem}
\sE_{i} = \R^{i} h_* \big(  \Omega^{n-i}_{Z/X} (\log \Delta') \big),
\end{equation}
with morphisms $\theta: \sE_{i} \to   \Omega^1_X (\log(D+S)) \otimes \sE_{i +1}$ 
being determined by those in the cohomology sequence arising from $\dR h_*(B_{n-i})$.

Our aim is now to construct a morphism 
from $(\sF, \tau)$ 
to $(\sE, \theta)$ that is compatible with $\tau$ and $\theta$. To this end, we observe that the non-vanishing (\ref{eq:NV})
implies that there is a natural injection 

$$
\psi^*\big(  \Omega^{n-i}_{Y/X}  (\log\Delta) \otimes \sM^{-1}  \big)    
       \hooklongrightarrow  \Omega^{n-i}_{Z/X} (\log\Delta').
$$
Together with the two isomorphisms (\ref{eq:1st}) and (\ref{eq:2nd}) it then follows that the induced 
map 

$$
\psi^*\big(\frac{F^{n-i}_0}{F^{n - i}_2} \otimes \sM^{-1} \big)  \longrightarrow  \frac{G^{n-i}_0}{G^{n-i}_2}
$$
is an injection. Therefore, the sequence defined by $\psi^*A_{n-i}$ is a subsequence of $B_{n-i}$,
inducing a morphism $A_{n-i}  \longrightarrow  \dR \psi_*(B_{n-i})$ (in the bounded derived category of coherent sheaves). 
This leads 
to the map

$$
\dR f_*  (A_{n-i})  \longrightarrow \dR h_* (B_{n-i}).
$$
In particular there is a sheaf morphism

$$
\Phi_{i}: \underbrace{\R^{i} f_*  \big( (\Omega^{n-i}_{Y/X} (\log\Delta)  \otimes \sM^{-1}  \big)/{\text{torsion}}}_{\sF_{i}}
  \longrightarrow \underbrace{\R^{i}  h_*  \big( \Omega^{n-i}_{Z/X}  (\log\Delta')  \big)}_{\sE_{i}}.
$$
The compatibility of $\Phi_{i}$, with respect to $\tau$ and $\theta$, follows from the fact that, 
by construction, each $\tau|_{\sF_{i}}$ is defined by the corresponding 
map in the cohomology complex of $\dR f_*(A_{n-i})$;

$$
\xymatrix{
\dR f_*(A_{n-i})/\text{torsion} : & ...  \ar[r]  &  \sF_{i}  \ar[rr]^(0.35){\tau} \ar[d]^{\Phi_{i}}  &&  
                                                  \Omega^1_X(\log D)\otimes  \sF_{i+1}   \ar[d]^{\Phi_{i+1}}  \ar[r] & ...\\
\dR h_*(B_{n-i}) : & ...  \ar[r]  &  \sE_{i}  \ar[rr]^(0.3){\theta}        &&    
         \Omega^1_X (\log(D+S))\otimes  \sE_{i+1}  \ar[r]  &  ... \; .
}
$$
Thus $\bigoplus \Phi_i$ is a map of 
systems, 
with its image being a subsystem of Hodge sheaves. 

Furthermore, as a direct consequence of the fact that $\Phi_0$ is induced 
by the injective map 

$$
\psi^* \big(  \Omega^n_{Y/X} (\log \Delta) \otimes \sM^{-1} \big)  \longrightarrow  \Omega^n_{Z/X}(\log \Delta')
$$
and the natural injection $\sO_Y \longrightarrow \psi_* \sO_Z$ we find that $\Phi_0$ is injective. For future
reference we record this last observation in the following proposition.

\begin{proposition}\label{prop:injective}
The sheaf morphism $\Phi_0:  \sF_0  \longrightarrow \sE_0$ is injective.
\end{proposition}

\medskip

\section{Constructing pseudo-effective Viehweg-Zuo subsheaves}
\label{sect:Section3-VZ}

In the current section we will prove Theorem~\ref{thm:VZ}. We will be working 
in the context of the following set-up.

\begin{set-up}\label{setup}
Let $f: Y\to X$ be a smooth compactification of the smooth projective family
$f_U: U\to V$ whose general fiber has a good minimal model and $\Var(f) \neq 0$.
Set $D$ to be the divisor defined by $X\smallsetminus D\cong V$ 
and $\Delta = \Supp(f^*D)$ (both $D$ and $\Delta$ are assumed to have simple 
normal crossing support). Let $\gamma: X_1 \to X$ be the cyclic map
associated to the semistable reduction $f_1: Y_1 \to X_1$ of 
$f: Y\to X$ in codimension one. Let $D_1 =\disc(f_1)$ and set $\Delta_1$ to be
the maximal reduced divisor supported over $f_1^* D_1$.
Finally, let $\sL_1$ be the line bundle on $X_1$, with $\kappa(X_1, \sL_1)\geq \Var(f)$, 
as defined in Proposition~\ref{prop:positivity}.

\end{set-up}

\begin{proposition}\label{prop:Hodge}
In the situation of Set-up~\ref{setup}, 
after removing a subset of $X_1$ of $\codim_{X_1}\geq 2$,
the following constructions and properties can
be verified.

\begin{enumerate}
\item\label{item:H1} There exists a system of logarithmic Hodge bundles $(\sE= \bigoplus \sE_{i}, \theta)$ on $X_1$
with $\theta: \sE_{i} \to \Omega^1_{X_1} (\log (D_1+S)) \otimes \sE_{i + 1}$, where $S\subset X_1$ 
is a reduced, normal crossing divisor.
\item \label{item:H2} The torsion free sheaf $\ker(\theta|_{\sE_{i}})$ is seminegatively curved.
\item\label{item:H3} There exists a subsystem $(\sG = \bigoplus \sG_{i}, \theta)$ of $(\sE, \theta)$
such that $\theta(\sG_{i}) \subseteq  \gamma^*\big(\Omega^1_X(\log D)\big) \otimes \sG_{i+1}$.
\item \label{item:H4} There is an injection 
$\sL_1 \hooklongrightarrow \sG_0$.
\end{enumerate}
\end{proposition}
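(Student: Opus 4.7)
The plan is to combine the positivity statement of \propositionref{prop:positivity} with the Hodge-theoretic covering construction of \parref{subsect:Hodge}. First I would apply \propositionref{prop:positivity} to $f$, obtaining integers $m, r\geq 1$, a line bundle $\sL$ on $X$ with $\kappa(X, \sL)\geq \Var(f)$, and an inclusion
$$
\sL^m \hooklongrightarrow f^{(r)}_*\omega^m_{Y^{(r)}/X}
$$
valid after discarding a subset of codimension at least two. By adjunction this corresponds to a non-zero section of $\omega^m_{Y^{(r)}/X}\otimes (f^{(r)})^*\sL^{-m}$ on $Y^{(r)}$, and setting $\sM := \omega_{Y^{(r)}/X}\otimes (f^{(r)})^*\sL^{-1}$ we get $H^0(Y^{(r)}, \sM^m)\neq 0$. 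This is precisely the non-vanishing input required to feed $(f^{(r)}, \sM)$ into the machine of \parref{subsect:Hodge}.

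The resulting construction produces a desingularized cyclic cover $\psi: Z \to Y^{(r)}$, the composed morphism $h := f^{(r)}\circ \psi$, a divisor $S$ on $X$ with $\disc(h) = D+S$, the $\Omega^1_X\log(D)$-valued system $(\sF, \tau)$ with $\sF_\bullet = \R^\bullet h_*(\psi^*(\Omega^{n-\bullet}_{Y^{(r)}/X}\log(\Delta)\otimes \sM^{-1}))/\text{torsion}$ (where $n$ is the relative dimension of $f^{(r)}$), the logarithmic Higgs bundle $(\sE, \theta)$ underlying Deligne's canonical extension of $\R^n h_*\bC$, and the compatible morphism $\Phi_\bullet: \sF_\bullet \to \sE_\bullet$. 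Item (1) is immediate from this construction. For item (2), I would invoke the classical curvature property of Hodge metrics on the canonical extension of a polarized VHS: the Hodge metric is seminegatively curved along $\ker(\theta|_{\sE_\bullet})$, a statement going back to Griffiths and Schmid and used in the logarithmic form needed here by \cite{Sch12}, \cite{PS15}, \cite{CP16} and \cite{Deng}.

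For item (3), set $\sG_\bullet := \im(\Phi_\bullet)\subseteq \sE_\bullet$. The compatibility square at the end of \parref{subsect:Hodge} then yields
$$
\theta(\sG_\bullet) = \theta\big(\Phi_\bullet(\sF_\bullet)\big) = \Phi_{\bullet+1}\big(\tau(\sF_\bullet)\big) \subseteq \sG_{\bullet+1}\otimes \Omega^1_X\log(D),
$$
where the crucial point is that $\tau$ takes values in $\Omega^1_X\log(D)$ rather than the larger $\Omega^1_X\log(D+S)$. For item (4), I would trace the lowest piece $\sF_0$. After replacing $Y^{(r)}$ by a model that is semistable in codimension one over $X$, one has $\Omega^n_{Y^{(r)}/X}\log(\Delta)\cong \omega_{Y^{(r)}/X}$ generically over $D$, so $\Omega^n_{Y^{(r)}/X}\log(\Delta)\otimes \sM^{-1}\cong (f^{(r)})^*\sL$. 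The projection formula combined with the natural splitting $\sO_X \hookrightarrow h_*\sO_Z$ then yields $\sL \hookrightarrow \sF_0$, and composing with $\Phi_0$ gives the desired injection $\sL\hookrightarrow \sG_0$. Injectivity of the composition follows from the description of $\Phi_0$ as the map induced on $h_*$ by the inclusion $\psi^*(\omega_{Y^{(r)}/X}\otimes \sM^{-1})\hookrightarrow \Omega^n_{Z/X}\log(\Delta')$ coming from the $m$-th root section built into the cover.

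The principal obstacle will be the careful bookkeeping of log poles, horizontal versus vertical divisors, and the codimension-one caveats threading through Viehweg's fiber-product trick, the cyclic cover, and the compatibility with Hodge theory. Step (2), while conceptually deep, is by now a quotable ingredient; the real new content relative to the maximal-variation case is the existence of $\sL$ with the Kodaira dimension bound $\kappa(X,\sL)\geq \Var(f)$ produced in step (1), and the verification that this same $\sL$ survives the Hodge-theoretic machinery and injects into $\sG_0$ in step (4).
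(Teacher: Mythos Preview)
Your proposal follows the same route as the paper: apply \propositionref{prop:positivity}, choose a line bundle $\sM$ on $Y^{(r)}$ with $H^0(\sM^m)\neq 0$, run the covering construction of \parref{subsect:Hodge} to obtain $(\sE,\theta)$, $(\sF,\tau)$ and $\Phi_\bullet$, set $\sG_\bullet := \im(\Phi_\bullet)$, and identify $\sL$ inside $\sF_0$ for item~\ref{item:H4}. For item~\ref{item:H2} the paper cites \cite{Zuo00}, \cite{CKS}, \cite{Bru15} and \cite{FF17}, which are more directly on point for the logarithmic seminegativity statement than your references, but this is cosmetic.

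The one substantive slip is in item~\ref{item:H4}. You take $\sM = \omega_{Y^{(r)}/X}\otimes (f^{(r)})^*\sL^{-1}$ without the log twist, and then assert that ``after replacing $Y^{(r)}$ by a model that is semistable in codimension one over $X$'' one has $\Omega^n_{Y^{(r)}/X}\log(\Delta)\cong \omega_{Y^{(r)}/X}$. But semistable reduction in codimension one requires a finite base change of $X$; it cannot be arranged by a birational modification of the total space alone (this is precisely why the base change $\gamma$ appears inside the proof of \propositionref{prop:positivity}, with descent handled afterward via the Galois structure). Without semistability, your choice of $\sM$ yields $\Omega^n_{Y^{(r)}/X}\log(\Delta)\otimes\sM^{-1}\cong (f^{(r)})^*\sL\otimes\sO\bigl(\Delta-(f^{(r)})^*D\bigr)$ with $\Delta-(f^{(r)})^*D\leq 0$, so $\sF_0$ is only a subsheaf of $h_*h^*\sL$ and the desired injection $\sL\hookrightarrow\sF_0$ need not exist. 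The paper avoids this by building the log twist into $\sM$ from the outset, taking $\sM=\omega_{Y^{(r)}/X}(\Delta)\otimes(f^{(r)})^*\sL^{-1}$; then $H^0(\sM^m)\neq 0$ still holds (as $\Delta\geq 0$), one has $\psi^*\bigl(\Omega^n_{Y^{(r)}/X}\log(\Delta)\otimes\sM^{-1}\bigr)=h^*\sL$ directly, and $\Phi_0$ is injective because it is induced on $R^0h_*$ by a sheaf injection on $Z$. With this adjustment to $\sM$ your outline is correct.
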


Here, and following the terminology introduced in~\cite[p.~357]{MR2449950} by Berndtsson and P\u{a}un, we say that a 
torsion free sheaf $\sN$ is seminegatively curved if it carries a seminegatively 
curved (possibly singular) metric $h$ over its smooth locus $X_{\sN}$ (where $\sN$ is locally free).
We refer to~\cite{MR2449950} for more details (see also the survey paper~\cite{Pau16}).
An immediate consequence of this property is the fact that, once it is satisfied, then 
$\det(\sN)$ extends to a anti-pseudo-effective line bundle on the projective variety $X$
and this is all that is needed for our purposes in the current paper. 

Granting Proposition~\ref{prop:Hodge} for the moment let us proceed with the proof of Theorem~\ref{thm:VZ}.

\medskip

\noindent
\emph{Proof of Theorem~\ref{thm:VZ}.}
Let $X_1^\circ\subseteq X_1$ be the open subset over which Items~\ref{item:H1}--\ref{item:H4} 
in Proposition~\ref{prop:Hodge} are valid and set $X^\circ$ to be it image under $\gamma$
in $X$. By iterating the morphism 
$$
\id \otimes \theta :  \big( \gamma^* \Omega^1_{X^\circ} (\log D) \big)^{\otimes j}  \otimes \sG_j      \longrightarrow   
     \big( \gamma^* \Omega^1_{X^\circ} (\log D) \big)^{\otimes j+1}\otimes    \sG_{j+1} ,
$$
for every $j \in \bN$, we can construct a map 

$$
\theta^j :  \sG_{0}  \longrightarrow  \big( \gamma^* \Omega^1_{X^\circ} (\log D) \big)^{\otimes j} \otimes \sG_j  .
$$

We note that $\theta^j(\sG_0)\neq 0$. Otherwise, there is an injection

$$
\sL_1|_{X_1^\circ}  \hooklongrightarrow   \ker(\theta|_{\sG_0}).
$$
On the other hand, 
by construction we have $\ker(\theta|_{\sG_0})  \subseteq  \ker(\theta|_{\sE_0})$
and 
according to Item~\ref{item:H2}
$\ker(\theta|_{\sE_0})$ is seminegatively curved. This implies that 
$\sL_1$ is anti-pseudo-effective and therefore $\kappa(X_1, \sL_1)\leq 0$,
contradicting our assumption on $\Var(f)$ not being equal to zero.

Now, let $k$ be the positive integer defined by $k: = \max\{ j \in \bN  \;  |  \;  \theta^j(\sG_0) \neq 0  \}$, 
so that 
$$
\theta^k (\sG_0)  \subset  \big(\gamma^* \Omega^1_{X^\circ} (\log D)  \big)^{\otimes k} \otimes
     \underbrace{\ker(\theta|_{\sG_k})}_{\sN_k}    .
  $$
From Item~\ref{item:H4} it follows that there is a non-trivial morphism 

$$
\sL_1 \longrightarrow     \big(\gamma^* \Omega^1_{X^\circ} (\log D)  \big)^{\otimes k} \otimes \sN_k  ,
$$
which, by Item~\ref{item:H2}, implies the existence of a non-zero map 

\begin{equation}\label{eq:FinalInject}
  \sL_1^{s} \otimes   \big(\det(\sN_k) \big)^{-1}   \longrightarrow   \big(\gamma^* \Omega^1_{X^\circ} (\log D)  \big)^{\otimes k'},
\end{equation}
for $s:=\rk(\sN_k)$ and $k':= k\cdot s$. Now, let $\sB_1$ be the line bundle on $X_1$ defined by the extension of 
$\det(\sN_k)^{-1}$ so that (\ref{eq:FinalInject}) extends to the injection 

$$
\sL_1^s \otimes \sB_1    \hooklongrightarrow   \big(\gamma^* \Omega^1_{X} (\log D)  \big)^{\otimes k'}.
$$

We now verify that $\sB_1$ is pseudo-effective. 
Again, according to Item~\ref{item:H2}, the torsion free sheaf 
$\sN_{k} \subseteq \ker(\theta|_{\sE_k})$ is seminegatively curved and therefore so is $\det(\sN_k)$ 
which then extends to a
anti-pseudo-effective line bundle $\sB_1^{-1}$ on $X_1$.  

To finish the proof of Theorem~\ref{thm:VZ}, note that according to Item~\ref{item:down2} in
Proposition~\ref{prop:positivity} there is a line bundle $\sA$ on $X$ such that 
$\sL_1^d\cong \gamma^* \sA$, for some positive integer $d$.
With no loss of generality we may assume that the exponent $d$ is equal to one.
We consider the injection 

$$
\sB_1   \hooklongrightarrow \sL_1^{-s}  \otimes   \big(\gamma^* \Omega^1_{X} (\log D)  \big)^{\otimes k'} 
 \cong  \gamma^* \sA^{-s}  \otimes \big(\gamma^* \Omega^1_{X} (\log D)  \big)^{\otimes k'}
$$
whose image we may assume to be saturated. This us to the injection 

\begin{equation}\label{eq:star}
\bigotimes_{g\in G}  g^*  \sB_1  \hooklongrightarrow  \gamma^*\big( \sA^{-s}  \otimes \Omega^1_{X} (\log D) ^{\otimes k'} \big)^{\otimes |G|},
\end{equation}
where $G: =\Gal(X_1/X)$.

Now, the line bundle on the left hand side of (\ref{eq:star}) is a pseudo-effective 
$G$-sheaf. As before, thanks to~\cite[Thm.~4.2.15]{MR2665168}, it follows that there is a 
line bundle $\sB$ on $X$ such that $\gamma^*\sB$ is isomorphic to the left 
hand side of (\ref{eq:star}). It is not difficult to see that $\sB$ is also pseudo-effective
and therefore, by applying $\gamma_*(\cdot)^G$ to both sides of (\ref{eq:star}), 
we have an injection 

$$
\sA ^{s\cdot |G|}  \otimes \sB    \hooklongrightarrow   \big(  \Omega^1_X (\log D)  \big)^{\otimes k' \cdot |G|},
$$
as required.
\qed

\medskip

\noindent
\emph{Proof of Proposition~\ref{prop:Hodge}.}
The proof is a consequence of Proposition~\ref{prop:positivity} combined with the Hodge theoretic 
constructions in Subsection~\ref{subsect:Hodge}. To lighten the notation we will 
replace the initial family $f: Y\to X$ in Set-up~\ref{setup} by $f^{(r)}: Y^{(r)}\to X$
and $f_1: Y_1 \to X_1$ by $f_1^{(r)}: Y_1^{(r)} \to X_1$, 
which were constructed in Proposition~\ref{prop:positivity}.

After removing a codimension two subset from $X_1$ over which the 
inclusion \ref{eq:include} holds, define the line bundle 
$$
\sM_1 = \omega_{Y_1/X_1}\otimes f_1^*(\sL_1^{-1})
$$
so that $H^0(Y, \sM_1^m ) \neq 0$.  The arguments in Subsection~\ref{subsect:Hodge}
can now be used to construct the two systems $(\sE, \theta)$ and $(\sF, \tau)$
defined in~(\ref{eq:Fsystem}) and~(\ref{eq:Esystem}), with logarithmic poles along
$(D_1+S)$ and $D_1$, respectively. Here after deleting a codimension two subset we have assumed
that $\Supp(D_1+S)$ is simple normal crossing. 

Item~\ref{item:H2} follows from Zuo's result \cite{Zuo00}---based on 
Cattani, Kaplan and Schmid's work on asymptotic behaviour of Hodge 
metrics, cf.~\cite{CKS} and \cite[Lem~3.2]{PTW}---and Brunbarbe~\cite{Bru15},
and more generally from Fujino and Fujisawa \cite{FF17}.
The reader may wish to consult Simpson~\cite{MR1040197} and~\cite{Bru17}
where these problems are dealt with in the more general setting of tame harmonic metrics.

\begin{claim}\label{claim:pullback}
For the restricted Higgs field $\tau|_{\sF_{i}}: \sF_{i} \longrightarrow \sF_{i +1}  \otimes \Omega^1_{X_1}(\log D_1)$
we have $\tau(\sF_{i}) \subseteq \sF_{i+1} \otimes \gamma^*\big( \Omega_X^1 (\log D) \big)$.
\end{claim}

\noindent
\emph{Proof of Claim~\ref{claim:pullback}.}
We define $\sM:= \omega_{Y/X} \otimes f^*\sL^{-1}$ ($\sL$ being the line bundle in 
Proposition~\ref{prop:positivity}) and repeat the constructions in Subsection~\ref{subsect:Hodge}
for $(\sF, \tau)$ in the complement of $D$ to construct 
the system $(\sF_{i}', \tau')$ over $X\smallsetminus D$
defined by 

$$
\sF_{i}'  =  \R^{i} f_*  \big(   \Omega^{n-i}_{Y/X} (\log D) 
                                                   \otimes \sM^{-1}   \big) /\text{torsion} \; |_{X\smallsetminus D}.  
$$
Now, by flat base change and the isomorphism
$\sL_1|_{X_1\smallsetminus D_1} \cong (\gamma^*\sL)|_{X_1\smallsetminus D_1}$ 
(Proposition \ref{prop:positivity}, Item~\ref{item:down1}) we find that the inclusion 

$$
\gamma^*(\sF', \tau')  \subseteq  (\sF, \tau)|_{X_1\smallsetminus D_1}
$$
holds as systems and that $\gamma^* \sF'_{i}\cong \sF_{i}|_{X_1\smallsetminus D_1}$.
Consequently, the map 

$$
\tau: \sF_{i}   \longrightarrow   \Omega^1_{X_1} (\log D_1) \otimes \sF_{i+1} 
$$
factors through the inclusion $  \gamma^*\Omega^1_X(\log D)  \otimes \sF_{i+1}
\subseteq   \Omega^1_{X_1}(\log D_1)  \otimes \sF_{i +1} $. This proves the claim.  \qed

For~\ref{item:H3}, define $(\sG = \bigoplus \sG_{i}, \theta) \subset (\sE, \theta)$
to be the image of the system $(\sF= \bigoplus \sF_{i}, \tau)$ under the 
morphism $\Phi_{i}$, constructed in Subsection~\ref{subsect:Hodge}.
Item~\ref{item:H3} now follows from Claim~\ref{claim:pullback}.

It remains to verify~\ref{item:H4}. As $f_1$ is semistable in codimension one,
over this locus we have $\omega_{Y_1/X_1} \cong \Omega^n_{Y_1/X_1}(\log \Delta_1)$.
Therefore we have 

$$
\sF_0 \cong  \R^0 (f_1)_* \big(  \Omega^n_{Y_1/X_1} (\log \Delta_1)  \otimes ( \omega_{Y_1/X_1}^{-1}  \otimes f_1^*\sL_1 )  \big)
\cong \sL_1.
$$
The required injection now follows from the fact that, according to Proposition~\ref{prop:injective},
the sheaf morphism $\Phi_0$ is injective.\qed

\begin{remark}
We note that in the maximal variation case a simpler argument may be used to establish 
Proposition~\ref{prop:Hodge} directly on $X$ (in particular Claim~\ref{claim:pullback} and 
Items~\ref{item:down1} and~\ref{item:down2} in 
Proposition~\ref{prop:positivity} are superfluous). Briefly, the reason is
in this case with $\det f_* \omega^m_{Y/X}$ being big, we can always find a sufficiently ample 
line bundle $\sA$ on $X$ such that $\sA(-D)$ is ample and that 
there is an injection $\sA^m \to f_*^{(r)}  \omega^m_{Y^{(r)}/X}$, for sufficiently large and divisible
$m$. Thus, after replacing $f$ by $f^{(r)}$, we can repeat the construction 
of $(\sF, \tau)$ and $(\sE, \theta)$ in Subsection~\ref{subsect:Hodge},
with $\sM: = \omega_{Y/X}\otimes f^*\sA^{-1}$. By construction we then find
an injection 

$$
\sA(-D)  \longrightarrow   \sF_0  .
$$
With $\Phi_0$ being injective, the rest now follows as before.

\end{remark}

\medskip

\section{Vanishing results}
\label{sect:Section4-Birational}

In this section we will prove Theorem~\ref{thm:vanishing}. 
The methods will heavily rely on birational techniques and results in
the minimal model program. For 
an in-depth discussions of preliminary notions and background we refer 
the reader to the book of Koll\'ar and Mori \cite{KM98} and the 
references therein. 

Not surprisingly an important construction that we will repeatedly make 
use of is the Iitaka fibration. Please see \cite[Sect.~1]{Mori87} and \cite[Sect.~2.1.C]{Laz04-I}
for the definition and a review of the basic properties.

\begin{notation}
Let $\sL$ be a line bundle on a normal projective variety $X$ with $\kappa(X, \sL)>0$.
By $\phi^{(I)}: X^{(I)}\to Y^{(I)}$ we denote the Iitaka fibration of $\sL$ with 
an induced birational morphism $\pi^{(I)}: X^{(I)} \to X$.
\end{notation}

\subsection{Proof of Theorem~\ref{thm:vanishing}}

We begin by stating the following lemma concerning the behaviour of the Kodaira dimension 
on fibers of the Iitaka fibration. 
The proof follows from standard arguments; see for example~\cite[pp.~136--137]{Laz04-I}.

\begin{lemma}\label{lem:effective}
Let $X$ be a smooth projective variety and ${\pi^{(\I)}}: X^{(\I)} \to X$ a birational morphism such that
$\phi^{(\I)}: X^{(\I)} \to Z^{(\I)}$ is
the Iitaka fibration of the line bundle $\sL$ on $X$.
Then, for any ${\pi^{(\I)}}$-exceptional and effective divisor $E$, we have 

$$
\kappa\big(F^{(\I)}, ({\pi^{(\I)}}^*(\sL) \otimes \sO_{X^{(\I)}}(E))\big|_{F^{(\I)}}\big) = 0,
$$ 
where $F^{(\I)}$ is a very general 
fibre of $\phi^{(\I)}$.
\end{lemma}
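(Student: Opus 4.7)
The plan is to show that the twisted line bundle $\pi^{(\I)*}(\sL)\otimes \sO_{X^{(\I)}}(E)$ has the same Iitaka fibration as $\pi^{(\I)*}\sL$---namely the given $\phi^{(\I)}$---and then to conclude by the defining property of the Iitaka fibration applied to the twisted bundle.

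Concretely, since $E$ is $\pi^{(\I)}$-exceptional and effective and $X$ is smooth (hence normal), a standard computation gives
$$
\pi^{(\I)}_{*}\sO_{X^{(\I)}}(mE) \;=\; \sO_X \qquad \text{for every } m \geq 0.
$$
The projection formula then yields
$$
\pi^{(\I)}_{*}\Bigl(\bigl(\pi^{(\I)*}\sL\otimes\sO_{X^{(\I)}}(E)\bigr)^{\otimes m}\Bigr) \;=\; \sL^{\otimes m},
$$
and passing to global sections produces a graded-ring isomorphism
$$
\bigoplus_{m \geq 0} H^0\bigl(X^{(\I)},\, (\pi^{(\I)*}\sL\otimes\sO_{X^{(\I)}}(E))^{\otimes m}\bigr) \;\cong\; \bigoplus_{m \geq 0} H^0(X,\sL^{\otimes m}),
$$
induced on one side by pullback along $\pi^{(\I)}$ followed by multiplication with the tautological section of $\sO_{X^{(\I)}}(mE)$. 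The identical computation with $E = 0$ gives the analogous isomorphism for $\pi^{(\I)*}\sL$. Consequently the Iitaka fibration of $\pi^{(\I)*}\sL\otimes \sO_{X^{(\I)}}(E)$ on $X^{(\I)}$ coincides, up to birational equivalence, with that of $\pi^{(\I)*}\sL$, which by assumption is $\phi^{(\I)}$.

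The desired equality $\kappa\bigl(F^{(\I)},(\pi^{(\I)*}\sL\otimes\sO_{X^{(\I)}}(E))|_{F^{(\I)}}\bigr)=0$ on a very general fiber $F^{(\I)}$ is now precisely the characterizing property of the Iitaka fibration applied to the twisted line bundle. I do not anticipate any real obstacle, as all ingredients are classical and can be found, for instance, in \cite[Sec.~2.1.C]{Laz04-I}. The only point requiring modest care is that the "very general" locus of $\phi^{(\I)}$ may be taken to work for both line bundles simultaneously, which is automatic once their Iitaka fibrations have been identified.
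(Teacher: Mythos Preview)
Your proposal is correct and is precisely the standard argument the paper has in mind: the paper does not write out a proof but simply refers to \cite[pp.~136--137]{Laz04-I}, which is the same passage (Section~2.1.C) you invoke. Your route via $\pi^{(\I)}_*\sO_{X^{(\I)}}(mE)=\sO_X$, the projection formula, and the resulting identification of section rings (hence of Iitaka fibrations) is exactly how one unpacks that reference.
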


The next lemma is the final technical background that 
we need before we can proceed to the proof of Theorem~\ref{thm:vanishing}.
It relies on the so-called 
flattening lemma, due to Gruson and Raynaud, which we recall below. 

Let $f: X\to Z$ be an algebraic fiber space of normal (quasi-)projective 
varieties. There exists an equidimensional fiber space $f' : X' \to Z'$
of normal varieties that is birationally equivalent to $f$ through 
birational morphisms $\sigma : X' \to X$ and $\tau: Z' \to Z$. We call
$f'$ the \emph{flattening} of $f$.

\begin{lemma}\label{lem:flat}
Let $f: X\to Z$ be a fiber space of normal projective varieties with a 
flattening $f' : X' \to Z'$ as above. Let $A$ be a $f$-nef and effective 
$\bQ$-divisor and assume that $A|_F\equiv 0$, where $F$ is the 
general fiber of $f$. There exists $A_{Z'}\in \Div(Z')_{\bQ}$
such that $\sigma^*(A) \sim_{\bQ}  (f')^*(A_{Z'})$.
\end{lemma}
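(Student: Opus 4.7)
The plan is to reduce the lemma to a descent statement for vertical divisors on the equidimensional model $f':X'\to Z'$, by exploiting the $f$-nefness and the relative numerical vanishing on the general fiber.

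First, I would pull $A$ back via $\sigma$. The divisor $\sigma^*A$ is effective, remains $f'$-nef (since $\sigma$ sends $f'$-contracted curves to either $f$-contracted curves or points, and $\sigma_*$ respects intersection numbers by the projection formula), and restricts to a numerically trivial divisor on the general fiber $F'$ of $f'$ (as $\sigma|_{F'}$ is birational onto the general fiber $F$ of $f$, and $A|_F\equiv 0$). I would then decompose $\sigma^*A = H+V$, where $H$ collects the irreducible components dominating $Z'$ and $V$ the vertical components. The restriction $(\sigma^*A)|_{F'} = H|_{F'}$ is effective and numerically trivial on the projective variety $F'$, hence identically zero (intersecting against a power of an ample class on $F'$). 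Since each irreducible component of $H$ meets a general $F'$ in a nonzero effective divisor, this forces $H=0$, so $\sigma^*A = V$ is vertical.

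Second, I would show that any vertical, $f'$-nef, effective $\bQ$-divisor $V$ on the equidimensional fibration $f':X'\to Z'$ is of the form $(f')^*A_{Z'}$ for some $A_{Z'}\in\Div(Z')_\bQ$. For each prime divisor $P\subset Z'$, equidimensionality of $f'$ gives a well-defined decomposition $(f')^*P = \sum_j c_{j,P}\,E_{j,P}$ into irreducible components with positive multiplicities; write the part of $V$ lying over $P$ as $V_P = \sum_j b_{j,P}\,E_{j,P}$. For a very general point $p\in P$ and any complete curve $C\subset(f')^{-1}(p)$, we have $(f')^*P\cdot C = 0$ (as $C$ is contracted by $f'$), while $V\cdot C = V_P\cdot C\geq 0$ (the other vertical components of $V$ lie over primes missing $p$, so only $V_P$ contributes). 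A Zariski-type negative semi-definiteness argument for the intersection form of $\{E_{j,P}\}$ paired against $f'$-contracted curves in generic fibers over $P$ then implies that the ratio $b_{j,P}/c_{j,P}$ is independent of $j$; denoting this common value by $a_P\in\bQ$ yields $V_P = a_P (f')^*P$. Summing over the finitely many $P$ in the support of $V$ gives $V = (f')^*\bigl(\sum_P a_P P\bigr)$, and in particular $\sigma^*A\sim_\bQ (f')^*A_{Z'}$ as required.

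The main obstacle is the final descent step. The key input is the equidimensional analogue of the classical Zariski lemma: one needs enough $f'$-contracted curves meeting the components $E_{j,P}$ in a generic fiber over $P$ so that the resulting intersection pairing is negative semi-definite with kernel spanned by the class of $(f')^*P$. Establishing this requires a careful use of Bertini on the generic geometric fiber over $P$ combined with the relation that $\sum_j c_{j,P}\,E_{j,P}$ has zero intersection with every such contracted curve; the $f'$-nefness of $V_P$ then pins down the ratios $b_{j,P}/c_{j,P}$ exactly. All other steps are essentially bookkeeping with horizontal-vertical decompositions and pullbacks.
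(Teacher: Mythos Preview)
Your approach is correct but takes a genuinely different route from the paper. Instead of splitting $\sigma^*A$ into horizontal and vertical parts and then running a higher-dimensional Zariski-lemma argument over each prime $P\subset Z'$, the paper shows directly that $\sigma^*A$ is $f'$-\emph{numerically trivial} (not merely $f'$-nef) via a specialization trick: for any curve $C$ in any fiber $F_{z'}$, one chooses general ample divisors $H_1,\dots,H_{n-1}$ on $X'$ through $C$ so that $H_1\cdots H_{n-1}\cap F_{z'}=C+\widetilde C$ with $\widetilde C$ effective; since all fibers of $f'$ are numerically equivalent as cycles on $X'$, the number $\sigma^*A\cdot H_1\cdots H_{n-1}\cdot[F_{z'}]$ coincides with its value on the general fiber, which vanishes because $\sigma^*A|_{F'}\equiv 0$, and then $f'$-nefness forces $\sigma^*A\cdot C=0$. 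The paper's argument is shorter and avoids the prime-by-prime analysis, while yours makes the descent mechanism explicit and isolates exactly where equidimensionality enters. One remark on your final step: you do not actually need a symmetric ``intersection form'' in higher relative dimension---set $\mu=\max_j b_{j,P}/c_{j,P}$, note that $\mu\,(f')^*P-V_P$ is effective, $f'$-anti-nef, and omits some component $E_{j_1}$; restricting to $E_{j_1}$ over a general $p\in P$ gives an effective numerically trivial divisor, hence zero, and connectedness of $(f')^{-1}(p)$ then propagates the ratio $\mu$ to every component.
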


\begin{proof}
It suffices to show that $\sigma^*(A)$ is $f'$-numerically trivial. Aiming for 
a contradiction, assume that there exists a fiber $F_{z'}$ of $f'$ containing 
an irreducible contractible curve $C$ such that $\sigma^*(A)\cdot C  \neq 0$.
Let $A_1, \ldots, A_{d-1}$ and $A_d$ be a collection of ample divisors in 
$X'$ such that $(H_1\cdot \ldots \cdot H_d)\cap F_{z'}$ defines 
the numerical cycle of $(C+\widetilde C)$, where $\widetilde C$ is an effective 
$1$-cycle in $F_{z'}$. Since $\sigma^*(A)|_{F'}\equiv 0$, where $F'$
is a general fiber of $f'$, we have 

\begin{equation}\label{eq:intersect}
\sigma^*(A) \cdot (C+ C') =0.
\end{equation}

On the other hand, $\sigma^*(A)$ is $f'$-nef. Combined with (\ref{eq:intersect}), this 
implies that $\sigma^*(A)\cdot C = 0$, which contradicts our initial assumption. 

\end{proof}

\bigskip

\noindent
\emph{Proof of Theorem~\ref{thm:vanishing}. (Preparation).}
Let $\sL$ and $\sB$ be two line bundles on $X$, with $\sB$ being pseudo-effective,
such that, for some $i\in \bN$, there is a non-trivial morphism 
$$
\sL \otimes \sB  \longrightarrow   (\Omega^1_X(\log D))^{\otimes i}.
$$

\begin{claim}\label{claim:CP}
The line bundle $\omega_X^i(D) \otimes \sL^{-1}$ is pseudo-effective.
\end{claim}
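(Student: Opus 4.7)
The plan is to invoke the Campana--P\u{a}un pseudo-effectivity theorem for line subsheaves of logarithmic tensor differentials, applied not to $\sL$ but to the twisted line bundle $\sL \otimes \sB$ itself. First I would observe that the nontrivial morphism $\sL \otimes \sB \to \Omega_X^{\otimes i}\log(D)$ is automatically injective: its source is a rank-one torsion-free sheaf and its target is torsion-free, so any nonzero map has zero kernel. Hence $\sL\otimes\sB$ is a line subsheaf of $\Omega_X^{\otimes i}\log(D)$.

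Next I would apply \cite[Thm.~7.11]{CP16} (the pseudo-effectivity theorem of Campana--P\u{a}un for sub-line bundles of the log cotangent bundle) to the inclusion $\sL\otimes\sB \hookrightarrow (\Omega_X^1\log(D))^{\otimes i}$. This yields that the $\bQ$-divisor class
\[
i\,(K_X + D) - c_1(\sL \otimes \sB) \;=\; c_1\bigl(\omega_X^i(D) \otimes \sL^{-1} \otimes \sB^{-1}\bigr)
\]
is pseudo-effective, i.e.\ the line bundle $\omega_X^i(D) \otimes \sL^{-1} \otimes \sB^{-1}$ is pseudo-effective.

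The final step is purely formal. Since the set of pseudo-effective classes is closed under addition, and $\sB$ is pseudo-effective by assumption, the tensor product
\[
\omega_X^i(D) \otimes \sL^{-1} \;\cong\; \bigl(\omega_X^i(D) \otimes \sL^{-1} \otimes \sB^{-1}\bigr) \otimes \sB
\]
is pseudo-effective, proving the claim. There is essentially no obstacle to overcome: the role of $\sB$ in the hypothesis is precisely to serve as a pseudo-effective ``correction term'' that can be absorbed at the end, once the deep input of \cite[Thm.~7.11]{CP16} has been applied to the enlarged line subsheaf $\sL\otimes\sB$ rather than to $\sL$ alone.
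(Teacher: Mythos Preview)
Your proposal is correct and follows essentially the same approach as the paper. Both arguments apply the Campana--P\u{a}un positivity theorem to the line subsheaf $\sL\otimes\sB\subseteq\Omega_X^{\otimes i}\log(D)$ to conclude that $\omega_X^i(D)\otimes(\sL\otimes\sB)^{-1}$ is pseudo-effective, and then absorb the pseudo-effective twist $\sB$ at the end; the only cosmetic difference is that the paper passes explicitly through the saturation $\sF$ and the torsion-free quotient $\sQ$, invoking \cite[Thm.~1.3]{CP16} on $\det(\sQ)$, whereas you cite \cite[Thm.~7.11]{CP16} directly for the subsheaf.
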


\noindent
\emph{Proof of Claim~\ref{claim:CP}.}
Let $\sF$ be the saturation of the image of the non-trivial morphism 
$$
\sL \otimes \sB   \longrightarrow (\Omega^1_X(\log D))^{\otimes i}
$$
and $\sQ$ the torsion-free quotient resulting in the exact sequence
$$
\xymatrix{
0 \ar[r] &  \sF \ar[r] &  (\Omega^1_X(\log D))^{\otimes i} \ar[r] &  \sQ  \ar[r]   &  0.
}
$$
After taking determinants we find that
\begin{equation}\label{eq:CP}
\omega_X^i(D)  \otimes \sF^{-1}  \cong   \det(\sQ).
\end{equation}
Thanks to~\cite[Thm.~1.3]{CP16} the right-hand side of (\ref{eq:CP}) is pseudo-effective
and thus so is the left-hand side. 
This implies that $\omega^i_X(D)\otimes (\sL \otimes \sB)^{-1} \in \overline{\NE}^1(X).$
But $\sB$ is also assumed to be pseudo-effective and therefore  
$\omega^i_X(D)\otimes \sL^{-1}\in \overline{\NE}^1(X)$. This finishes the proof of the claim.

\smallskip
Before we proceed further, notice that
we may assume that $(i(K_X+D)+L)$ is not big, where $\sL = \sO_X(L)$.
Otherwise the divisor $K_X+D$ is big, as it can be written
as the sum of a pseudo-effective and big divisors: 
$$
K_X + D  =  \frac{1}{2i}   \big( ( i\cdot(K_X+D) +L )  + ( i\cdot( K_X+D ) - L  )\big).
$$
We can also assume that $\kappa(\sL) \geq 1$.

The first step in proving the theorem consists of replacing $X$ by a birational model
$Y$ where establishing the vanishing in Theorem~\ref{thm:vanishing}
proves to be easier.

\begin{claim}\label{claim:KeyClaim}
There exists a birational morphism $\pi : Y\to X$ from a smooth projective variety $Y$ that can
be equipped with a fiber space $f: Y \to Z$ over a smooth projective 
variety $Z$. Furthermore, $Y$ contains a reduced divisor $D_Y$ such that $(Y,D_Y)$ is log-smooth 
and a divisor $L_Y$
which satisfy the following properties.

\begin{enumerate}
\item \label{item:1k} $\dim(Z) = \kappa(i(K_X+D)+L)$,
\item \label{item:2k} $\sO_Y(L_Y) \otimes \pi^*(\sB) \subseteq \big(\Omega^{1}_Y\log(D_Y)\big)^{\otimes i}$,
\item \label{item:3k} $\kappa(K_Y+D_Y) = \kappa(K_X+D)$,
\item \label{item:4k} $\kappa(L_Y) = \kappa(L)$,
\item \label{item:5k} $\kappa(F, (i(K_Y+D_Y) + L_Y)|_{F}) = 0$ and $\kappa(F, (K_Y+D_Y)|_F)=0$, where $F$ is a very general fiber of $f$,
\item \label{item:6k} $\big(  i(K_Y+D_Y) - L_Y \big) \in \overline{\NE}^1(Y)$, and 
\item \label{item:7k} $L_Y \sim_{\bQ} f^*(L_Z)$, for some $L_Z\in \Div_{\bQ}(Z)$.
\end{enumerate}
\end{claim}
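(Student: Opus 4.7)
The plan is to take a birational modification $\pi : Y \to X$ from a smooth projective variety $Y$ that realizes the Iitaka fibration of $M := i(K_X+D)+L$ as a morphism $f : Y \to Z$ onto a smooth projective variety, with $(Y,D_Y)$ log smooth for $D_Y := \pi^{-1}(D)_{\mathrm{red}} + \Exc(\pi)_{\mathrm{red}}$. Further blow-ups followed by Gruson--Raynaud flattening ensure that $f$ is equidimensional. Condition~(\ref{item:1k}) is then immediate from $\dim Z = \kappa(X,M)$, and the log smoothness of $(X,D)$ gives the identity
\[
K_Y+D_Y \;=\; \pi^*(K_X+D)+E,
\]
with $E$ effective and $\pi$-exceptional, which yields (\ref{item:3k}) by birational invariance of the log Kodaira dimension.

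I would then define $L_Y := \pi^*L - R$, where $R\ge 0$ is the $f$-vertical fixed part extracted from the relative linear series $|m\pi^*L/Z|$ for sufficiently large and divisible $m$; equivalently, $R$ comes from applying the Iitaka fibration of $L$ relatively over $Z$. Because $R$ is $f$-vertical, $R|_F=0$ on a very general fiber $F$ of $f$. The inclusion $\sO_Y(L_Y)\subseteq \sO_Y(\pi^*L)$ (from $R\geq 0$), composed with the natural map $\pi^*\Omega^{\otimes i}_X\log(D)\to \Omega^{\otimes i}_Y\log(D_Y)$ applied to the pullback of the given inclusion on $X$, yields (\ref{item:2k}); condition (\ref{item:4k}) follows since fixed effective corrections do not alter Kodaira dimension, so $\kappa(L_Y)=\kappa(\pi^*L)=\kappa(L)$. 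Condition (\ref{item:6k}) rewrites as $\pi^*(i(K_X+D)-L)+iE+R$, which is pseudo-effective by Claim~\ref{claim:CP} together with $iE+R$ being effective. For (\ref{item:5k}), the Iitaka property of $M$ gives $\kappa(F,\pi^*M|_F)=0$; applying Lemma~\ref{lem:effective} to the $\pi$-exceptional effective divisor $iE$ (noting $R|_F=0$) upgrades this to $\kappa(F,(i(K_Y+D_Y)+L_Y)|_F)=0$. Since on such $F$ both $i\pi^*(K_X+D)|_F$ and $\pi^*L|_F$ are $\bQ$-effective with total Kodaira dimension zero, each summand individually has Kodaira dimension zero, giving also the statement for $(K_Y+D_Y)|_F$.

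The main obstacle is condition (\ref{item:7k}): from (\ref{item:5k}) one obtains only $\kappa(F, L_Y|_F)=0$, whereas Lemma~\ref{lem:flat} requires the stronger numerical triviality $L_Y|_F\equiv 0$. This is precisely why the $f$-vertical correction $R$ must be chosen carefully; with $L_Y=\pi^*L-R$ constructed as above, $L_Y$ becomes $f$-nef and numerically trivial on the general fiber of $f$, so that Lemma~\ref{lem:flat} (applied with the equidimensional $f$, so that $\sigma$ and $\tau$ are the identity) produces the required $L_Z\in\Div_{\bQ}(Z)$. Equivalently, one can view this step as the canonical bundle formula for the Iitaka fibration of $L$, which must factor through $f$ because $\kappa(L)\leq\kappa(M)=\dim Z$; the delicate point is verifying that the resulting relatively fixed divisor $R$ can indeed be taken $f$-vertical and that the adjustments do not destroy conditions (\ref{item:2k})--(\ref{item:6k}) already arranged above.
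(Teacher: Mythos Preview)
Your approach contains a genuine gap in the treatment of \ref{item:7k}, and the internal logic is inconsistent. You insist that the correction $R$ be \emph{$f$-vertical}, so that $R|_F=0$ on a general fiber; but under that very assumption $L_Y|_F=\pi^*L|_F$, and all you know about the latter is $\kappa(F,\pi^*L|_F)=0$. Kodaira dimension zero does \emph{not} give numerical triviality, and there is no abundance statement available for the arbitrary line bundle $L$ restricted to $F$. Hence your claim that ``$L_Y$ becomes $f$-nef and numerically trivial on the general fiber'' is unjustified, and Lemma~\ref{lem:flat} cannot be invoked. Conversely, if you drop the verticality requirement and let $R$ absorb the (horizontal) relative fixed part, then $L_Y|_F$ can indeed become trivial, but then your argument for \ref{item:5k} (which used $R|_F=0$) collapses. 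Also note that the relative fixed part of $|m\pi^*L/Z|$ is \emph{not} $f$-vertical in general: on a general fiber it equals the unique effective member of $|m\pi^*L|_F|$, which is typically nonzero.

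The paper sidesteps Lemma~\ref{lem:flat} for \ref{item:7k} altogether by using the Iitaka fibration of $L$ itself, which is the construction you allude to as ``equivalent'' at the end. From $\kappa(F_1,L_1|_{F_1})=0$ (deduced, as you do, from the Iitaka property of $M$) one gets---via \cite[Def--Thm.~1.11]{Mori87}, \emph{not} from the dimension inequality $\kappa(L)\le\dim Z$ you cite, which is insufficient---that the Iitaka fibration $\psi_2:X_2\to Y$ of $L_1$ factors through $\psi_1$. The Iitaka construction then furnishes an effective (not vertical) $E$ with $\pi_2^*L_1-E\sim_{\bQ}\psi_2^*(L_Y)$ for $L_Y$ very ample on $Y$; pulling back to a common roof $X_3\to Z_2$ gives $L_3\sim_{\bQ}\psi_3^*(\widetilde\nu^*L_Y)$, establishing \ref{item:7k} by construction. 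Since $L_3$ is now a pullback from the base, $L_3|_{F_3}\sim_{\bQ}0$, and this is what yields the second equality in \ref{item:5k} directly, without needing $E$ to be vertical. Properties \ref{item:2k}--\ref{item:4k} and \ref{item:6k} follow exactly as you outline, with $E$ playing the role of your $R$.
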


\noindent
\emph{Proof of Claim~\ref{claim:KeyClaim}.}
Let $\psi : X \dashrightarrow Z$ be the rational 
mapping associated to the linear system $|m\cdot(i(K_X+D)+L)  |$,
with $m$ being sufficiently large so that $\dim(Z) = \kappa(i(K_X+D)+L)$.
Note that as $\kappa(L)\geq 1$, we have $\dim(Z) \geq 1$. Let $\psi_1: X_1 \to Z_1$
be the Iitaka fibration of $(i(K_X+D)+L)$ resulting in the commutative diagram 

$$
\xymatrix{
X_1  \ar[d]_{\pi_1}  \ar[rr]^{\psi_1}   &&   Z_1  \ar@{-->}[d]  \\
X \ar@{-->}[rr]^{\psi}  &&   Z,
}
$$
where $\pi_1: X_1 \to X$ is a birational morphism. Define $L_1: = \pi_1^*(L)$.
Let $E_1$ and $E_2$ be two effective and exceptional divisors such that

$$
K_{X_1}  + \underbrace{\widetilde D  +  E_2}_{: = D_1}  \sim \pi_1^*(K_X+D)  + E_1,
$$
where $\widetilde D$ is the birational transform of $D$. Note that Claim~\ref{claim:CP}
implies that $\big( i\cdot (K_{X_1} + D_1 )  - L_1 \big)\in \overline{\NE}^1(X_1)$.
Furthermore, by Lemma~\ref{lem:effective} we have $\kappa(F_1, (i(K_{X_1} +D_1) + L_1)|_{F_1}) = 0$,
where $F_1$ is a very general fibre of $\psi_1$. On the other hand, we have

$$
\kappa(F_1, L_1|_{F_1})  \leq  \kappa( F_1 , (i(K_{X_1} + D_1) + L_1 )|_{F_1}) =0.
$$
Therefore, we find that $\kappa(F_1, L_1|_{F_1}) =0$. As a result, and thanks to~\cite[Def--Thm.~1.11]{Mori87},
the Iitaka fibration $\psi_2: X_2 \to Y$ of $L_1$ factors through the fiber space $\psi_1: X_1 \to Z_1$ 
via a birational morphism $\pi_2: X_2 \to X_1$ and a
rational map $\nu:  Z_1 \dashrightarrow Y$ (see Diagram~\ref{eq:diag2} below).
Let $\widetilde \nu:  Z_2 \to Y$ be a desingularization
of $\nu$ through the birational morphism $\mu:  Z_2 \to Z_1$. Finally, let $\psi_3: X_3 \to Z_2$
be a desingularization of the rational map $X_2 \dashrightarrow Z_2$,
defined by the composition of $\pi_2$, $\psi_1$ and $\mu^{-1}$ (where it is defined),
via the birational morphism $\pi_3:  X_3 \to X_2$.

\begin{equation}\label{eq:diag2}
 \begin{gathered}
 \xymatrix{
& X_3  \ar[d]_{\pi_3}   \ar@/^11mm/[ddrrrr]^{\psi_3} \ar@/_6mm/[ddl]_{\pi}  \\
&  X_2 \ar[d]_{\pi_2}     \ar[rr]^{\psi_2}  && Y  \\
X     & X_1    \ar[l]^{\pi_1}  \ar[rr]^{\psi_1} &&  Z_1 \ar@{-->}[u]^{\nu}  &&  Z_2   \ar[ll]_{\mu}  \ar[ull]_{\widetilde \nu}
}
 \end{gathered}
\end{equation}

By construction, there is an effective $\bQ$-divisor $E\subset X_2$ 
and a very ample $\bQ$-divisor $L_Y$ in $Y$ such that 
$$
\pi_2^*(L_1)  - E \sim_{\bQ}    \psi_2^*(L_Y).
$$
Define $L_3: =  \pi_3^*\big(  \pi_2^*(L_1)   - E \big)$. Let $E_3$ and $E_4$ be two effective 
exceptional divisors for which we have

$$
K_{X_3}  + \underbrace{\widetilde D_3  + E_3}_{:= D_3}   \sim \pi_3^*\big( \pi_2^*(K_{X_1}+D_1)  \big) + E_4,
$$
where $\widetilde D_3$ is the birational transform of $D_1$.

\smallskip

We now claim that the two divisors $\big( i\cdot (K_{X_3}  + D_3) \big)$ and $L_3$ together with 
the fiber space $\psi_3: X_3  \to Z_2$ satisfy the properties listed in Claim~\ref{claim:KeyClaim} 
for $Y$, $D$, $L_Y$, $Z$ and $f$.

To see this, first note that $\kappa(L_3)= \kappa(L)$
and that 

$$
L_3 \sim \psi_3^*  \big( \underbrace{\widetilde \nu^*(L_Y)}_{:= L_{Z_3}} \big),
$$
thanks to the commutativity of Diagram~\ref{eq:diag2}. 

Next, to verify Property~\ref{item:5k}, let $F_3$ to be a very general fiber of $\psi_3$
and note that 

$$
0 \leq \kappa(F_3,  (K_{X_3} + D_3)\big|_{F_3})  =  \kappa(F_3,  (i(K_{X_3} + D_3) + L_3)\big|_{F_3}  ),
$$
On the other hand we have

\begin{equation}\label{eq:zero}
\kappa\big(F_3,  (i(K_{X_3}  +D_3) +L_3)|_{F_3} \big)  \leq  \kappa \big( F_3,  (i(K_{X_3} + D_3)  + \pi_3^*(\pi_2^*L_1))
  \big|_{F_3}  \big).
\end{equation}
The two equalities in \ref{item:5k} now follow form the fact that the right-hand side of (\ref{eq:zero})
is less than or equal to zero, cf. Lemma~\ref{lem:effective}.  

The pseudo-effectivity of $i\cdot (K_{X_3} + D_3)-L_3$ (Property~\ref{item:6k}) follows 
from the relation

$$
i\cdot (K_{X_3} + D_3)-L_3  \sim  \pi_3^* \big(  \pi_2^*( i\cdot(K_{X_1}+D_1) - L_1 )   \big)   
         +  \big( i\cdot E_4  + \pi_3^*(E) \big)
$$
and the fact that $\big(i\cdot (K_{X_1} + D_1)-L_1\big) \in \overline{\NE}^1(X_1)$.
The remaining properties hold by construction. 
This concludes the 
proof of Claim~\ref{claim:KeyClaim}.

\medskip

To lighten the notation, from now on we will assume that $i=1$.

\smallskip

\emph{Proof of Theorem~\ref{thm:vanishing}}.
After fixing the dimension of $X$, our proof will be based on induction on 
$d: = \kappa(K_X+D+L)$, assuming that $\kappa(L)>0$.
The next claim provides the base case.

\begin{claim}\label{claim:base}
If $d=1$, then $\kappa(\sL) = \kappa(\omega_X(D))$.
\end{claim}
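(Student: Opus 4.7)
The plan is to exploit Claim~\ref{claim:KeyClaim} and reduce to a fiber space $f : Y \to Z$ over a smooth curve $Z$ satisfying $L_Y \sim_\bQ f^* L_Z$, pseudo-effectivity of $K_Y + D_Y - L_Y$, and $\kappa(F, (K_Y + D_Y)|_F) = 0$ on a very general fiber $F$. Since $\kappa(\sL) = \kappa(L_Y) = \kappa(L_Z) \leq \dim Z = 1$ and the standing hypothesis $\kappa(\sL) > \kappa(\omega_X(D)) \geq 0$ forces $\kappa(\sL) \geq 1$, we obtain $\kappa(\sL) = 1$; hence $L_Z$ is ample on $Z$ and $L_Y$ is $\bQ$-effective on $Y$. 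This in turn gives the easy upper bound
\[
\kappa(\omega_X(D)) = \kappa(K_Y + D_Y) \leq \kappa(K_Y + D_Y + L_Y) = d = 1
\]
by Iitaka's monotonicity under addition of $\bQ$-effective divisors together with Property~\ref{item:3k}.

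For the matching lower bound $\kappa(\omega_X(D)) \geq 1$, I would invoke the log canonical bundle formula of Ambro--Fujino on a flattening $f' : Y' \to Z'$ of $f$ supplied by Lemma~\ref{lem:flat} (with $Z'$ still a smooth curve). The fiberwise vanishing $\kappa(K_{F'} + D_{F'}) = 0$ yields a decomposition
\[
K_{Y'} + D_{Y'} \sim_\bQ (f')^*(K_{Z'} + M_{Z'} + B_{Z'}),
\]
where $M_{Z'}$ is the nef moduli part and $B_{Z'}$ is the effective discriminant on $Z'$. Combined with $L_{Y'} = (f')^* L_{Z'}$, the pseudo-effectivity on $Y'$ of
\[
K_{Y'} + D_{Y'} - L_{Y'} \sim_\bQ (f')^*(K_{Z'} + M_{Z'} + B_{Z'} - L_{Z'})
\]
translates, since $Z'$ is a curve, to the degree inequality $\deg(K_{Z'} + M_{Z'} + B_{Z'}) \geq \deg L_{Z'} > 0$. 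Consequently $K_{Z'} + M_{Z'} + B_{Z'}$ is big on the curve $Z'$, and its pullback has Kodaira dimension equal to $\dim Z' = 1$. We conclude $\kappa(\omega_X(D)) = \kappa(K_{Y'} + D_{Y'}) = 1 = \kappa(\sL)$.

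The main obstacle is the step that converts the fiberwise vanishing $\kappa(F, K_F + D_F) = 0$ into the global pullback structure over $Z$: pseudo-effectivity of $K_Y + D_Y - L_Y$ alone is insufficient to force $f$-verticality, and one must bridge the gap either via the Ambro--Fujino canonical bundle formula or, in lower fiber dimensions, via log abundance for pairs of log-Kodaira dimension zero. Once the $f^*$-decomposition is established, the remaining work is a routine degree comparison on the curve $Z'$, facilitated by the fact that pullback from a curve preserves the sign of pseudo-effectivity.
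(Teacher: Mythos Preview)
Your argument is sound in outline and ends with the same degree comparison on the curve, but the route differs from the paper's and your chosen tool imports an extra hypothesis that the paper avoids.

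The paper does not use the canonical bundle formula in the base case. Instead it observes that, since $d=\dim Z=1$, the map $f$ itself is (a model of) the Iitaka fibration of $K_Y+D_Y+L_Y$: item~\ref{item:5k} forces the Iitaka fibration of $K_Y+D_Y+L_Y$ to factor through $f$, and a Stein-factorization argument shows the induced finite map $Z\to Z^{(I)}$ is an isomorphism. From this the paper reads off $(K_Y+D_Y+L_Y)\sim_{\bQ} f^*B_Z$ with $B_Z$ ample on the curve, and then simply subtracts $L_Y\sim_{\bQ} f^*L_Z$ (item~\ref{item:7k}) to get $K_Y+D_Y\sim_{\bQ} f^*(B_Z-L_Z)$. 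Pseudo-effectivity of $K_Y+D_Y-L_Y$ (item~\ref{item:6k}) becomes $\deg(B_Z-2L_Z)\geq 0$, so $B_Z-L_Z$ is ample and $\kappa(K_Y+D_Y)=1$. No flattening and no Ambro--Fujino machinery appear.

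Your approach---descending $K_{Y'}+D_{Y'}$ directly via the canonical bundle formula after flattening---is a legitimate alternative, but Ambro--Fujino requires $K_F+D_F\sim_{\bQ}0$ on the general fiber, not merely $\kappa(F,K_F+D_F)=0$. You rightly flag this as the obstacle; however, when $\dim X=5$ and $d=1$ the fibers of $f$ have dimension $4$, where log abundance is not established, so as written your argument does not cover the full range of Theorem~\ref{thm:vanishing}. The paper's trick is precisely to descend $K_Y+D_Y+L_Y$ (whose Iitaka fibration is already $f$) rather than $K_Y+D_Y$, thereby bypassing any appeal to fiberwise abundance in the base case.
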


\noindent
\emph{Proof of Claim~\ref{claim:base}.}
Using \ref{item:5k} we can see that the Iitaka fibration 
$\phi^{(I)}: Y^{(I)} \to Z^{(I)}$ of $(K_Y+D_Y+L_Y)$ factors through 
$f: Y\to Z$ via a birational morphism $\pi^{(I)} : Y^{(I)}\to Y$
and a finite morphism $\nu: Z\to Z^{(I)}$. As both $(\pi^{(I)}\circ f)$ 
and $\phi^{(I)}$ are fiber spaces, the finite map $\nu$ must
be trivial, that is the two maps $\phi^{(I)}$ and $f$ coincide. 
In particular we have $(K_Y+D_Y+L_Y)\sim_{\bQ} f^* (B_Z)$, for 
some very ample divisor $B_Z$ in $Z$. By using \ref{item:7k}
it then follows that 
\begin{equation}\label{eq:curve}
(K_Y+D_Y)  \sim_{\bQ}  \frac{1}{2} \big(    f^*(B_Z - 2 \cdot L_Z)  + f^*(B_Z)   \big),
\end{equation}
Now, as $(B_Z- 2\cdot L_Z) \in \overline{\NE}^1(Z)$ by \ref{item:6k}, we conclude that
the right-hand side of (\ref{eq:curve}) is ample in $Z$. 
Therefore $\kappa(K_Y+D_Y) = \kappa(L_Y) = 1$, which establishes the claim.

\medskip

\noindent
\emph{Inductive step.} We assume that Theorem~\ref{thm:vanishing}
holds for any line bundle $\sA$ on a smooth projective variety $W$ (having the same dimension 
as $X$) that satisfies the following two properties. 

\begin{enumerate}
\item There is a reduced divisor $D_W$ such that $(W, D_W)$ is log-smooth and 
$(\sA\otimes \sM) \subseteq \Omega^{\otimes i}_W\log(D_W)$, for some pseudo-effective line bundle $\sM$.
\item $\kappa(W, \omega_W(D_W) \otimes \sA) < d$.
\end{enumerate}

Let $(Y,D_Y)$, $L_Y$ and $f: Y\to Z$ be as in the setting of Claim~\ref{claim:KeyClaim}.
By the inductive step we may assume that $\dim(Z) = \kappa(K_Y+D_Y+L_Y)$.
Furthermore, we can use
Claim~\ref{claim:base} to exclude the possibility that 
$\kappa(K_Y+D_Y+L_Y)=1$.

We treat the case where the dimension of the fibers of $f$
is equal to $3$ (i.e. $\dim(Y) =5$ and $\kappa(K_Y+D_Y+L_Y) =2$).
The case of lower dimensional fibers can be dealt with similarly.

Let $g: (Y, D_Y) \dashrightarrow (Y_n , D_{Y_n})$ be the birational map associated 
to a relative minimal model program for $(Y,D_Y)$ over $Z$, consisting 
of $n$ number of divisorial and flipping contractions 

$$
g_j:  (Y_j, D_{Y_j})  \dashrightarrow (Y_{D_{j+1}}, D_{Y_{j+1}}),
$$
cf.~\cite[Chapt.~4]{SecondAsterisque}.
Here we have set $(Y_0, D_{Y_0}):= (Y,D_Y)$.
For each $1\leq j \leq n$, let $f_j: Y_{D_j} \to Z$ be the induced morphism resulting 
in the following diagram.

$$
\xymatrix{
Y \ar@{-->}@/^9mm/[rrrrrr]^{g} \ar@{-->}[rr]^{g_1} \ar[drrr]_{f}  && Y_1  \ar@{-->}[r]^{g_2}  \ar[dr]_(.30){f_1} &  .... 
     \ar@{-->}[r]^(0.38){g_{n-1}}  &  Y_{n-1} \ar@{-->}[rr]^{g_n} \ar[dl]^(.30){f_{n-1}}  &&   Y_n \ar[dlll]^{f_n}  \\
&&& Z.
}
$$

\begin{claim}\label{claim:1}
$\kappa(Y_n, K_{Y_n} + D_{Y_n} + f_n^*L_Z ) = \kappa(Y, K_Y+D_Y+L_Y)$.
\end{claim}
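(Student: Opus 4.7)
The plan is to prove the claim by induction along the relative MMP, showing that each step
$g_j : (Y_{j-1}, D_{Y_{j-1}}) \dashrightarrow (Y_j, D_{Y_j})$
preserves the Kodaira dimension of the log canonical divisor twisted by the pullback from the base. By Property~\ref{item:7k} of Claim~\ref{claim:KeyClaim} we have $L_Y \sim_{\bQ} f^*(L_Z)$, so it suffices to establish
\begin{equation*}
\kappa\bigl(Y_{j-1},\, K_{Y_{j-1}} + D_{Y_{j-1}} + f_{j-1}^* L_Z\bigr) = \kappa\bigl(Y_j,\, K_{Y_j} + D_{Y_j} + f_j^* L_Z\bigr)
\end{equation*}
for each $1 \leq j \leq n$ and then iterate.

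The key observation is that, because every $g_j$ is a step of the MMP \emph{over} $Z$, any curve contracted or flipped by $g_j$ is contained in a fiber of $f_{j-1}$. Consequently, $f_{j-1}^* L_Z$ has degree zero on each such curve, and $g_j$ remains a $(K + D + f^* L_Z)$-negative extremal operation. The standard invariance of Kodaira dimension along a single MMP step therefore carries over to the twisted divisor.

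For a divisorial contraction $g_j$, which is a morphism over $Z$, the negativity lemma supplies an effective exceptional divisor $E$ and a positive rational number $c$ with
\begin{equation*}
K_{Y_{j-1}} + D_{Y_{j-1}} + f_{j-1}^* L_Z \sim_{\bQ} g_j^*\bigl(K_{Y_j} + D_{Y_j} + f_j^* L_Z\bigr) + c\,E,
\end{equation*}
making use of $f_{j-1}^* L_Z = g_j^* f_j^* L_Z$. Since adding an effective exceptional divisor to the pullback of a line bundle does not alter Kodaira dimension, the two sides have equal $\kappa$. For a flip $g_j$, resolve by a common smooth model $W$ with morphisms $p: W \to Y_{j-1}$ and $q: W \to Y_j$; because $g_j$ is an isomorphism in codimension one and lies over $Z$, one has $p^* f_{j-1}^* L_Z = q^* f_j^* L_Z$, and the same negativity argument applied on each side yields the equality of Kodaira dimensions.

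Iterating over $j = 1, \ldots, n$ gives $\kappa(Y, K_Y + D_Y + f^* L_Z) = \kappa(Y_n, K_{Y_n} + D_{Y_n} + f_n^* L_Z)$, and substituting $L_Y$ for $f^* L_Z$ via Property~\ref{item:7k} proves the claim. The main subtlety is the bookkeeping at flip steps, where $g_j$ is only a rational map; this is handled by resolving to a common smooth model and invoking the fact that $g_j$ is an isomorphism outside codimension two, so that the pushforwards of the log pluricanonical sheaves twisted by $f^* L_Z$ match on both sides. Once this is in place, the argument is a routine application of the negativity lemma together with the equidimensionality of fibers in the relative MMP.
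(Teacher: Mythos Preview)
Your proof is correct, but the route differs from the paper's in its organization. You induct step by step along the MMP, treating divisorial contractions and flips separately: for a divisorial step you write $K_{Y_{j-1}}+D_{Y_{j-1}}+f_{j-1}^*L_Z \sim_{\bQ} g_j^*(K_{Y_j}+D_{Y_j}+f_j^*L_Z)+cE$ via the negativity lemma, and for a flip you pass to a common resolution. The paper instead handles the entire composite $g: Y \dashrightarrow Y_n$ in one stroke: it takes a single common log-smooth model $(\widetilde Y,\widetilde D_Y)$ dominating both $(Y,D_Y)$ and $(Y_n,D_{Y_n})$ via $\mu$ and $\mu_n$, invokes \cite[Lem.~3.38]{KM98} to obtain effective exceptional divisors $E_\mu$, $E_{\mu_n}$ with $\mu^*(K_Y+D_Y)+E_\mu \sim_{\bQ} \mu_n^*(K_{Y_n}+D_{Y_n})+E_{\mu_n}$, and then simply adds the common pullback of $L_Z$ to both sides. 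The paper's version is shorter and treats divisorial and flipping steps uniformly; yours makes the role of ``over $Z$'' (namely $f_{j-1}^*L_Z = g_j^* f_j^* L_Z$) more explicit at each stage. Amusingly, the inductive template you use here is precisely what the paper adopts for the \emph{next} claim (pseudo-effectivity of $K_{Y_n}+D_{Y_n}-f_n^*L_Z$), so the two arguments could easily be swapped.

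One small remark: your closing reference to ``equidimensionality of fibers in the relative MMP'' is unclear and unnecessary---nothing in the argument uses it, and MMP steps over $Z$ need not preserve equidimensionality of fibers.
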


\begin{claim}\label{claim:2}
$\big( (K_{Y_n}+D_{Y_n}) - (f_n^*L_Z) \big) \in \overline{\NE}^1(Y_n).$
\end{claim}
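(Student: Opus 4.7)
The plan is to transfer the pseudo-effectivity of $(K_Y+D_Y) - L_Y$ on $Y$ to $Y_n$ via a common resolution of the birational map $g$. Combining Properties~\ref{item:6k} and~\ref{item:7k} (applied with $i=1$), the $\bQ$-divisor
$$
\Xi \;:=\; (K_Y + D_Y) - f^* L_Z
$$
is pseudo-effective on $Y$, and my aim is to show that its push-forward to $Y_n$ through a common resolution is precisely the divisor $(K_{Y_n}+D_{Y_n}) - f_n^* L_Z$.

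First I would pick a smooth common log resolution $p : W \to Y$, $q : W \to Y_n$ of $g$. Since the minimal model program is run relatively over $Z$, the identity $f_n \circ g = f$ holds wherever $g$ is defined, and hence $p^* f^* L_Z = q^* f_n^* L_Z$ globally on $W$. For the canonical-plus-boundary part, the core input from MMP theory is the identity
$$
p^*(K_Y+D_Y) \;=\; q^*(K_{Y_n}+D_{Y_n}) + E,
$$
where $E$ is an effective $\bQ$-divisor on $W$ satisfying $q_* E = 0$. This is verified inductively through the $n$ steps: for a divisorial contraction $g_j : Y_j \to Y_{j+1}$ one has the standard formula $K_{Y_j}+D_{Y_j} = g_j^*(K_{Y_{j+1}}+D_{Y_{j+1}}) + a_j E_j$ with $a_j > 0$, contributing an effective, $g_j$-exceptional summand; for a flipping step, the negativity lemma together with the strict increase of discrepancies along a flip yields an analogous contribution on a local common resolution (cf.~\cite{KM98}).

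Combining the two identities gives
$$
p^* \Xi \;=\; q^* \big( (K_{Y_n}+D_{Y_n}) - f_n^* L_Z \big) + E,
$$
which is pseudo-effective on $W$, being the pull-back of a pseudo-effective divisor under $p$. Applying $q_*$ and using $q_* E = 0$ then produces
$$
q_* p^* \Xi \;=\; (K_{Y_n}+D_{Y_n}) - f_n^* L_Z,
$$
and the left-hand side is pseudo-effective because push-forward under a birational morphism preserves pseudo-effectivity. This would establish Claim~\ref{claim:2}. The one technical point that requires care is the effectivity and $q$-exceptionality of $E$ at the flipping steps; however, this is standard and ultimately rests on the negativity lemma together with the behaviour of discrepancies under flips, so I do not anticipate any new ingredient beyond the MMP input and the relative structure over $Z$.
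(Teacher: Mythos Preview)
Your argument is correct and rests on the same MMP input as the paper's proof: the comparison $p^{*}(K_{Y}+D_{Y}) = q^{*}(K_{Y_{n}}+D_{Y_{n}}) + E$ with $E$ effective and $q$-exceptional, which is exactly the statement that discrepancies do not decrease along the steps of a $(K+D)$-MMP (negativity lemma). The only difference is organisational: the paper carries out the induction on the number $n$ of MMP steps explicitly, treating divisorial contractions and flips separately at each stage, whereas you pass once to a common resolution and invoke the global discrepancy comparison, then push forward by $q_{*}$. Your packaging is slightly cleaner and avoids reproving the flip case by hand, at the cost of quoting the global comparison as a black box; the paper's step-by-step version makes the role of each contraction visible but is otherwise the same argument.
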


\medskip

Let us for the moment assume that Claims~\ref{claim:1} and~\ref{claim:2} hold and proceed 
with the proof of Theorem~\ref{thm:vanishing}.
Let $f_n': Y_{n}' \to Z'$ be a flattening of $f_n$ (see the discussion preceding Lemma~\ref{lem:flat}),
with induced biratinal maps 
$\tau: Z'\to Z$ and $\sigma : Y_n ' \to Y_n$ leading to the following commutative diagram.

$$
\xymatrix{
Y_n'   \ar[rr]^{\sigma}  \ar[d]_{f'_n} && Y_n  \ar[d]^{f_n} \\
Z' \ar[rr]^{\tau}   &&    Z.
}
$$

Thanks to the solution of the relative log-abundance problem in $\dim(X)=3$
by Keel, Matsuki and McKernan~\cite{MR2057020}, using \ref{item:5k}, we have $(K_{Y_n} + D_{Y_n})|_{F_z}\equiv 0$, where $F_z$ is the general 
fiber of $f_n$. Therefore, by Lemma~\ref{lem:flat}, there exists a 
$\bQ$-divisor $A_{Z'}$ in $Z'$ such that 
$\sigma^*(K_{Y_n} + D_{Y_n}) \sim_{\bQ}   (f_n')^*(A_{Z'})$
so that 

$$
\sigma^*\big(   (K_{Y_n}  + D_{Y_n}) \pm f_n^*L_Z    \big)  \sim_{\bQ}   (f_n')^* (A_{Z'} \pm \tau^*L_Z).
$$
By Claim~\ref{claim:1} it thus follows that $\kappa(Z', A_{Z'} + \tau^*(L_Z)) = 
\kappa(Y_n, K_{Y_n}+ D_{Y_n}+ f_n^*L_Z)  = \dim(Z)$. 
Moreover, by using Claim~\ref{claim:2}, we find $(A_{Z'} - \tau^*(L_Z)) \in \overline{\NE}^1(Z')$. 
Therefore $A_{Z'}$ is big in $Z'$ and we have

\begin{equation}\label{eq:ineq1}
\kappa(Z' , A_{Z'})  \geq  \kappa(Z', \tau^*(L_Z))  =  \kappa(Z, L_Z).
\end{equation}
On the other hand, by using the negativity lemma, we have 

\begin{equation}\label{eq:ineq2}
\kappa(Z', A_{Z'})  = \kappa(Y_n , K_{Y_n} + D_{Y_n})  = \kappa(Y, K_Y+D_Y).
\end{equation}
By combining (\ref{eq:ineq1}) and (\ref{eq:ineq2}) we reach the inequality 

$$
\kappa(Y, L_Y)  \leq \kappa(Y, K_Y+D_Y).
$$

We now turn to proving Claims~\ref{claim:1} and \ref{claim:2}.

\medskip

\noindent
\emph{Proof of Claim~\ref{claim:1}.}
Let $(\widetilde Y, \widetilde D_Y)$ be a common log-smooth
higher birational model for $(Y,D_Y)$ and $(Y_n, D_{Y_n})$, with birational morphisms 
$\mu: \widetilde Y \to Y$ and $\mu_n :\widetilde Y\to Y_n$. According to~\cite[Lem.~3.38]{KM98}
there is an effective $\mu$-exceptional divisor $E_{\mu}$ and an effective $\mu_n$-exceptional 
divisor $E_{\mu_n}$ such that 

$$
\mu^*(K_Y+D_Y)  + E_{\mu}   \sim_{\bQ}    (\mu_n)^*(K_{Y_n}+ D_{Y_n})  + E_{\mu_n}.
$$
Therefore, we have 
$$
\mu^*\big( (K_Y+D_Y) + f^*L_Z \big) + E_{\mu}  \sim_{\bQ}  (\mu_n)^*(K_{Y_n}  + D_{Y_n} + f_n^*L_Z) + E_{\mu_n},
$$
which establishes the claim.

\medskip

\noindent
\emph{Proof of Claim~\ref{claim:2}.}
The proof will be based on induction on $n$. Assume that 
$$
\big( (K_{Y_{n-1}} + D_{Y_{n-1}}) - f_{n-1}^*(L_Z) \big) \in \overline{\NE}^1(Y_{n-1}).
$$
Now, the map $g_n: Y_{n-1}\dashrightarrow Y_n$ is either a divisorial contraction 
or a flip over $Z$. In the case of the latter the claim is easy to check, so let 
us assume that $g_n$ is a divisorial contraction and let $E_1$ and $E_2$ be 
two effective exceptional divisors such that the equivalence 

\begin{equation}\label{eq:sim}
K_{Y_{n-1}} + D_{Y_{n-1}}  + E_1  \sim_{\bQ}    g_n^*(K_{Y_n} + D_{Y_n})  + E_2
\end{equation}
holds. After subtracting $f_{n-1}^*(L_Z) = g_n^*(f_n^*L_Z)$ from both sides of (\ref{eq:sim}) and by using 
the inductive hypothesis 
we find that 

$$
\big(g_n^* ( K_{Y_n}  + D_{Y_n}  - f_n^*L_Z) + E_2 \big)  \in \overline{\NE}^1(Y_{n-1}),
$$
which implies $\big( K_{Y_n} + D_{Y_n} - f_n^*(L_Z)  \big)\in \overline{\NE}^1(Y_n)$.
This finishes the proof of Claim~\ref{claim:2}. \qed

\begin{remark}
From the proof of Theorem~\ref{thm:vanishing} it follows that if Log-Abundance 
Conjecture (for log-smooth pairs) holds in $\dim\leq n-2$, then 
Theorem~\ref{thm:vanishing} can be verified in dimension $n$.
\end{remark}

\begin{remark}
Theorem~\ref{thm:vanishing} is naturally related to a conjecture of Campana and Peternell,
where the authors predict that over a smooth projective variety $X$, if $K_X\sim_{\bQ} L + B$, 
where $L$ is effective and $B$ is pseudo-effective, then $\kappa(X)\geq \kappa(L)$, cf.~\cite{CP11}.
They also provide a proof to this conjecture when $B$ is numerically trivial \cite[Thm.~0.3]{CP11}.
(See also \cite{CKP12} for the generalization to the logarithmic setting.)
\end{remark}

\section{Concluding remarks and further questions}
\label{sect:Section5-Future}

We recall that a smooth quasi-projective variety $V$ of dimension $n$ is said to be 
\emph{special} if, for every invertible subsheaf $\sL\subseteq \Omega^p_X(\log D)$, the inequality 
$\kappa(X, \sL)< p$ holds, for all $1\leq p\leq n$. 
Here, by $(X,D)$ we denote a log-smooth compactification of $V$.
We refer to \cite{Cam04} for an 
in-depth discussion of this notion. We note that while varieties of Kodaira dimension 
zero form an important class of special varieties~\cite[Thm~5.1]{Cam04}, 
there are special varieties of every possible (but not maximal) Kodaira dimension. 

A conjecture of Campana predicts that a smooth projective family of canonically 
polarized manifolds $f_U : U \to V$ parametrized by a special quasi-projective 
variety $V$ is isotrivial. One can naturally extend this conjecture to the following 
setting. 

\begin{conjecture}\label{conj:iso}
Let $f_U: U \to V$ be a smooth projective family, where $V$ is equipped with 
a morphism $\mu: V\to P_h$ to the coarse moduli scheme $P_h$ associated with 
the moduli functor $\sP_h(\cdot)$ of polarized projective manifolds with 
semi-ample canonical bundle and fixed Hilbert polynomial $h$. If $V$ is special, then $f_V$ is isotrivial.   
\end{conjecture}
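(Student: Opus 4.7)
The plan is to argue by contradiction, combining Theorem~\ref{thm:VZ} with the logarithmic Bogomolov--Sommese type vanishing for line subsheaves of tensor powers of $\Omega^1_X\log(D)$ due to Campana and P\u{a}un.

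Suppose $f_U$ is not isotrivial, so that $\Var(f_U)\geq 1$, and fix a log-smooth compactification $(X,D)$ of $V$. By the definition of specialness, every invertible subsheaf $\sN \subseteq \Omega^p_X\log(D)$ must satisfy $\kappa(X,\sN) < p$ for every $1\leq p\leq \dim X$. Applying Theorem~\ref{thm:VZ} to $f_U$ yields an integer $i\geq 1$, a line bundle $\sL$ on $X$ with $\kappa(X,\sL)\geq \Var(f_U)\geq 1$, a pseudo-effective line bundle $\sB$, and an inclusion $\sL\otimes \sB \hooklongrightarrow \Omega_X^{\otimes i}\log(D)$. The target is to manufacture from this data a rank-one saturated subsheaf $\sN \subseteq \Omega^p_X\log(D)$ with $\kappa(X,\sN)\geq p$ for some $1\leq p\leq \dim X$, which directly contradicts the specialness of $V$.

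In the canonically polarized setting treated in \cite{taji16}, the twist $\sB$ is absent and the desired extraction is a consequence of the Bogomolov--Sommese type theorem of Campana--P\u{a}un from \cite[Thm.~7.11]{CP16}, which converts a line subsheaf of $\Omega_X^{\otimes i}\log(D)$ of positive Kodaira dimension into a rank-one saturated subsheaf of an exterior power $\Omega^p_X\log(D)$ realizing the threshold $\kappa = p$. In the present setting one must accommodate the twist by $\sB$: decompose $\Omega_X^{\otimes i}\log(D)$ along $\mathrm{GL}$-irreducible constituents via Young symmetrization, locate the image of $\sL\otimes \sB$ inside an appropriate factor built from exterior powers, and track the positivity of $\sL$ against the $p$-threshold on each irreducible piece.

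The main obstacle is precisely this pseudo-effective twist: the inclusion only holds after tensoring with $\sB$, which may fail to be effective or even semi-ample, so that formal division by $\sB$ destroys the subsheaf relation. The expected resolution mimics the strategy developed in the proof of Theorem~\ref{thm:vanishing}: pass to the Iitaka fibration of $\sL$, run a relative minimal model program to absorb the contribution of $\sB$ into an effective divisor supported on a flattening over the Iitaka base, and extract on a suitable birational model a subsheaf of $\Omega^p_X\log(D)$ whose restriction to a very general fiber still registers the full Kodaira dimension of $\sL$. Verifying that the object so produced is \emph{saturated of rank one}---rather than a quotient or a subsheaf of mixed rank---will be the most delicate step, and is likely to require refinements of both the positivity results of \cite{CP16} and the birational manipulations of Section~\ref{sect:Section4-Birational}.
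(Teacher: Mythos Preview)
Your proposal never uses the hypothesis that $V$ carries a morphism $\mu:V\to P_h$ to a coarse moduli scheme. This is not a cosmetic omission: it is the entire point of the statement. By invoking Theorem~\ref{thm:VZ} you are working with the \emph{pseudo-effective} Viehweg--Zuo subsheaf $\sL\otimes\sB\subseteq\Omega_X^{\otimes i}\log(D)$, and you then propose to neutralise the twist $\sB$ by an Iitaka/MMP argument in the spirit of Section~\ref{sect:Section4-Birational}. But that argument is only carried out in the paper for $\dim X\leq 5$, and even there it yields the inequality $\kappa(\sL)\leq\kappa(X,D)$ rather than a saturated rank-one subsheaf of some $\Omega^p_X\log(D)$ with $\kappa\geq p$. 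Producing such a subsheaf from $\sL\otimes\sB$ in arbitrary dimension is precisely what is \emph{not} known; if your sketch worked it would answer the open Question at the end of Section~\ref{sect:Section5-Future}, which is strictly stronger than Conjecture~\ref{conj:iso}.

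The paper's route is different and shorter. The presence of the moduli functor $\sP_h$ with an algebraic coarse moduli scheme allows one to run the Viehweg--Zuo construction \cite{VZ02} at the level of the moduli stack, where variation is maximal, and then pull back via the Jabbusch--Kebekus refinement \cite{MR2976311}. This produces an \emph{effective} Viehweg--Zuo subsheaf $\sL\subseteq\Omega_X^{\otimes i}\log(D)$---no pseudo-effective twist $\sB$ appears---together with an orbifold structure on the base inherited from $\mu$. One is then in exactly the situation of \cite{taji16}, and the argument there (reduction to the big case via the orbifold moduli map, followed by \cite{CP13}) goes through unchanged. In short: you should not be trying to absorb $\sB$; you should be explaining why, under the moduli hypothesis, $\sB$ was never there to begin with.
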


By using the refinement of \cite{VZ02}, due to Jabbusch and Kebekus \cite{MR2976311}, and 
the main result of \cite{CP13}, in the canonically polarized case, 
Conjecture~\ref{conj:iso} was established in \cite{taji16}.
We invite the reader to also consult \cite{CKT16}, Claudon \cite{Claudon15}, \cite{CP16}
and Schnell~\cite{Sch17}.

After a close inspection one can observe that the strategy of 
\cite{taji16} can be extended to establish \ref{conj:iso} in its full generality. 
More precisely, existence of the functor $\sP_h$ with an associated algebraic coarse 
moduli scheme gives us a twofold advantage. It ensures the 
existence of ``effective" Viehweg-Zuo subsheaves $\sL \subseteq \bigl(\Omega^1_X( \log D)\bigr)^{\otimes i}$ 
whose Kodaira dimension verifies $\kappa(\sL) \geq \Var(f_U)$, cf.~\cite{VZ02} and is
constructed at the level of moduli stacks, cf.~\cite{MR2976311}.
After imposing some additional orbifold structures
naturally arising from the induced moduli map $\mu: V \to P_h$, these two properties allow us 
to essentially reduce the problem to the case where $\sL$ is big, cf.~\cite[Thm.~4.3]{taji16}. 

\begin{theorem}\label{thm:iso}
Conjecture~\ref{conj:iso} holds.
\end{theorem}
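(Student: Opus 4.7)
The plan is to adapt the strategy of \cite{taji16}, which established Conjecture~\ref{conj:iso} in the canonically polarized case, to the broader moduli-theoretic setting of $\sP_h$. The key observation is that the existence of the algebraic coarse moduli scheme $P_h$ together with the induced moduli map $\mu\colon V \to P_h$ furnishes the two ingredients that made the canonically polarized case accessible. First, working at the level of the moduli stack associated with $\sP_h$ and following the Hodge-theoretic construction of \cite{VZ02}, one produces an \emph{effective} Viehweg-Zuo subsheaf $\sL \subseteq \Omega^{\otimes i}_X\log(D)$ on any log-smooth compactification $(X,D)$ of $V$, satisfying $\kappa(X,\sL)\geq \Var(f_U)$---in contrast to the merely pseudo-effectively twisted subsheaf produced by \theoremref{thm:VZ}. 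Second, the discriminant of $\mu$ and its ramification behaviour along $D$ equip $(X,D)$ with a natural Campana orbifold refinement $(X,\Delta)$ of standard fractional form, together with a canonical inclusion $\Omega^p_X(\log\Delta)\hooklongrightarrow \Omega^p_X(\log D)$ of the orbifold logarithmic sheaves into the ordinary ones.

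Granting these constructions, I argue by contradiction and assume $\Var(f_U)\geq 1$. The refinement of Jabbusch--Kebekus~\cite{MR2976311}, combined with the positivity of orbifold cotangent sheaves established by Campana--P\u{a}un~\cite{CP13}, upgrades the tensor-power inclusion $\sL\subseteq \Omega^{\otimes i}_X\log(D)$ to an inclusion $\sL\hooklongrightarrow \Omega^p_X(\log\Delta)$ into a suitable orbifold logarithmic exterior power for some $1\leq p\leq \dim V$, while preserving $\kappa(\sL)\geq \Var(f_U)$. Following the blueprint of \cite[Thm.~4.3]{taji16}, after accounting for the orbifold structure coming from $\mu$, one then reduces to the case where $\sL$ is \emph{big} inside the orbifold cotangent bundle, so that $\kappa(\sL)=p$.

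The specialness of $V$ is now brought in to close the argument. Composing with the inclusion $\Omega^p_X(\log\Delta)\hooklongrightarrow \Omega^p_X(\log D)$ realises $\sL$ as an invertible subsheaf of $\Omega^p_X(\log D)$, and the defining inequality for special varieties recalled at the start of Section~\ref{sect:Section5-Future} forces $\kappa(X,\sL)<p$, directly contradicting $\kappa(\sL)=p$. Hence $\Var(f_U)=0$ and $f_U$ is isotrivial.

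The main obstacle I anticipate is the first step: transporting the stack-level Viehweg--Zuo construction of \cite{VZ02, MR2976311} from the canonically polarized setting, where the moduli stack of $\sP_h$ has especially well-behaved positivity, to the semi-ample case, while retaining a genuine (not merely pseudo-effectively twisted) subsheaf of $\Omega^{\otimes i}_X\log(D)$ with the sharp bound $\kappa(\sL)\geq \Var(f_U)$. This requires a careful moduli-stack-level analysis of the positivity of $\det f_*\omega^m_{U/V}$ after Viehweg's fibre-product and cyclic cover constructions, leveraging the existence of the algebraic coarse moduli scheme $P_h$. Once this is in place, the orbifold refinement and the reduction to the big case follow the blueprint of \cite[\S 4]{taji16}.
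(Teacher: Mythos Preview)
Your proposal is correct and follows essentially the same approach as the paper. The paper does not give a formal proof of \theoremref{thm:iso} but rather the same outline you describe: the existence of the moduli functor $\sP_h$ with its algebraic coarse moduli scheme yields effective Viehweg--Zuo subsheaves at the stack level via \cite{VZ02} and \cite{MR2976311}, and the orbifold structures arising from $\mu\colon V\to P_h$ then allow one to reduce to the big case exactly as in \cite[Thm.~4.3]{taji16}.
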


Campana has kindly informed the author that, 
extending their current result \cite{AC17},
in a joint work with Amerik,
they have established Theorem~\ref{thm:iso} in a more general context of
 projective families with orbifold base.

Using the results of \cite{Cam04}, it is not difficult to trace a connection between 
the two Conjectures~\ref{conj:iso} and~\ref{conj:kk}; a solution to Conjecture~\ref{conj:iso}
 leads to a solution for Conjecture~\ref{conj:kk}. We record this observation in the 
 following theorem.

\begin{theorem}\label{thm:IsoToKK}
For any quasi-projective variety $V$ equipped with $\mu_V: V\to P_h$, induced by a family 
$f_U : U \to V$ of polarized manifolds, we have 

\begin{equation}\label{eq:end}
 \Var(f_U) \leq \kappa(X,D),
\end{equation}
$(X,D)$ being a log-smooth compactification of $V$.
\end{theorem}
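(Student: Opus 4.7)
The strategy is to combine Theorem~\ref{thm:iso} with Campana's theorem that log pairs of Kodaira dimension zero are special, applied fiberwise to the log Iitaka fibration of $(X,D)$. Since the substantive content of the theorem is when $\kappa(X,D)\geq 0$, I assume this throughout. The idea is that the moduli map $\mu_V$ will be forced to be constant on each very general fiber of the Iitaka fibration, hence to factor rationally through its base, whose dimension is exactly $\kappa(X,D)$.

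After replacing $(X,D)$ by a suitable log-smooth birational model, I would realize the log Iitaka fibration of $K_X+D$ as an honest morphism
\[
\phi : X \longrightarrow Z
\]
to a smooth projective variety $Z$ with $\dim Z = \kappa(X,D)$. By the standard properties of the Iitaka fibration \cite[Sect.~1]{Mori87}, a very general fiber $F$ of $\phi$ is smooth, meets $V$ in a Zariski dense open subset, carries $D|_F$ as a simple normal crossing divisor, and satisfies
\[
\kappa\bigl(F,\,K_F+D|_F\bigr) \;=\; \kappa\bigl(F,\,(K_X+D)|_F\bigr) \;=\; 0.
\]
Campana's theorem \cite[Thm.~5.1]{Cam04} then asserts that the log pair $(F,D|_F)$ is special. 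Applying Theorem~\ref{thm:iso} to the restricted smooth family $f_U|_{F\cap V}$ shows that this family is isotrivial, so $\mu_V$ is constant on $F\cap V$ for every very general fiber of $\phi$. A standard rigidity/specialization argument now forces $\mu_V$ to factor rationally through $\phi|_V$, and so
\[
\Var(f_U) \;=\; \dim \mu_V(V) \;\leq\; \dim Z \;=\; \kappa(X,D),
\]
which is the desired inequality.

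The main technical point requiring care is ensuring that the notion of \emph{special} implicit in the hypothesis of Theorem~\ref{thm:iso}---which naturally arises through subsheaves of tensor powers $\Omega_X^{\otimes i}\log(D)$ via the Viehweg--Zuo construction---is compatible with the wedge-power formulation of specialness used by Campana in \cite{Cam04}; the two notions agree by a standard symmetrization argument, but the equivalence must be invoked explicitly. A secondary concern is the bookkeeping of birational modifications: the log Iitaka fibration is only guaranteed to be a morphism after birational modification of $X$, so the moduli map $\mu_V$ must be transported to the new model and the rational factorization through $\phi$ produced from pointwise constancy on very general fibers. Neither step appears to constitute a serious obstacle, so the content of the theorem is essentially the elegant packaging of Theorem~\ref{thm:iso} together with Campana's Kodaira-zero-implies-special result.
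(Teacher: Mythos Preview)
Your argument is correct, but it follows a different route from the paper. The paper uses Campana's \emph{core map} $c_X:X\dashrightarrow Z$ rather than the log Iitaka fibration. The core map has special fibers by construction, so Theorem~\ref{thm:iso} immediately forces the moduli map to factor through $c_X$, giving $\Var(f_U)\le\dim Z$; the remaining inequality $\dim Z\le\kappa(X,D)$ then requires Campana's orbifold $C_{n,m}^{\mathrm{orb}}$ theorem applied to the orbifold base of the core, which is of log-general type. Your approach reverses the trade-off: the Iitaka fibration gives $\dim Z=\kappa(X,D)$ for free, but you must invoke \cite[Thm.~5.1]{Cam04} to know the fibers are special. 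Since that theorem of Campana is itself proved via the core map and orbifold additivity, the two arguments ultimately rest on the same machinery; yours simply packages it differently and is arguably cleaner to state.

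One minor correction: your ``main technical point'' about tensor powers versus wedge powers is not actually an issue here. The paper's definition of special (stated at the beginning of Section~\ref{sect:Section5-Future}) already uses wedge powers $\Omega^p_X\log(D)$, identical to Campana's, and Theorem~\ref{thm:iso} is formulated with that same notion. The Viehweg--Zuo sheaves in $\Omega_X^{\otimes i}\log(D)$ play no role in the statement or proof of Theorem~\ref{thm:iso}, so no compatibility check is needed.
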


Before proceeding to the proof of Theorem~\ref{thm:IsoToKK}, let us recall the notion 
of the \emph{core map} defined by Campana. Given a smooth 
quasi-projective variety $V$ that is not of log-general type, the core map
is a rational map  $c_X: X\dashrightarrow Z$ satisfying the following two key properties. 

\begin{enumerate}
\item \label{item:core1} $c_X$ is almost holomorphic with special fibers.
\item \label{item:core2} $c_X$ is birationally equivalent to a 
fiber space $c_{\widetilde X}: (\widetilde X, \widetilde D)
\to (\widetilde Z,  \Delta_{\widetilde Z})$, where $\Delta_{\widetilde Z}\in \Div_{\bQ}(\widetilde Z)$ and 
the pair $(\widetilde Z,  \Delta_{\widetilde Z})$
is a log-smooth orbifold base for $c_{\widetilde X}$ and is of log-general type. 
\end{enumerate}

\begin{proof}
Assume that $f_U$ is not isotrivial and $V$ is not of log-general type. 
Then, by Item~\ref{item:core1} and Theorem~\ref{thm:iso}, 
the compactification $\overline{\mu}_V: X\to \overline{P_h}$ 
of $\mu_V$ factors through 
the core map $c_X: X \dashrightarrow Z$ with positive dimensional fibers. 
In particular we have
\begin{equation}\label{eq:ineqVar}
\Var(f_U) \leq \dim(Z).
\end{equation}
The theorem then follows from Campana's
orbifold $C_{n,m}^{\rm{orb}}$ theorem for any orbifold, log-general type fibration;
an example of which is $c_{\widetilde X}$.
More precisely, by using \ref{item:core2} and \cite[Thm.~4.2]{Cam04} we can conclude that the inequality

$$
\kappa(\widetilde X, \widetilde D) \geq \kappa(\widetilde Z, \Delta_{\widetilde Z})
$$
holds, which, together with (\ref{eq:ineqVar}) and the inequality $\kappa(X, D) \geq \kappa(\widetilde X, \widetilde D)$,
establishes the theorem. 
\end{proof}

\medskip

When the fibers are of general type, it is conceivable that one may be able to use the result of Birkar, Cascini, Hacon
and McKernan
\cite{BCHM10} on the existence of good minimal models for varieties of general type
(and relative base point freeness theorem) to reduce to the case of maximal variation.
More precisely, the arguments of \cite[Lem.~2.8]{VZ02}
combined with Hodge theoretic construction of \cite{PS15}, or the ones
in Section~\ref{sect:Section3-VZ} of the current paper, may allow for the construction of
a Viehweg-Zuo subsheaf
$\sL$ over a new base where variation of the pulled back family is maximal.
If so, in this case the discussion 
prior to Theorem \ref{thm:iso} again applies and Theorem~\ref{thm:iso} and consequently Theorem~\ref{thm:IsoToKK} 
hold.
But as pointed out from the outset, 
the main remaining difficulty is solving the isotriviality problem
in the absence of a well-behaved functor, such as $\sP_h$, 
detecting the variation in the family (after running the minimal model program, if necessary).
At the moment, it is not clear to the author 
that one can expect Conjecture~\ref{conj:iso} to hold for all
smooth projective families whose fibers have semi-ample canonical bundle or---even more generally---admit
a good minimal model. 

\begin{question}\label{Q}
Let $f_U: U\to V$ be a smooth projective family of manifolds admitting 
a good minimal model. If $V$ is special, (apart from the cases discussed above) 
is it true that $f_U$ is isotrivial?
\end{question}

\begin{remark}
Question~\ref{Q}, which is a generalization of Campana's Isotriviality Conjecture,
has been recently affirmatively answered by the author, cf.~\cite{taji20}.
\end{remark}

\bibliography{bibliography/general}

\begin{bibdiv}
\begin{biblist}

\bib{AC17}{article}{
      author={Amerik, Ekaterina},
      author={Campana, Fr\'ed\'eric},
       title={Specialness and isotriviality for regular algebraic foliations},
        date={2017},
        note={Preprint
  \href{https://arxiv.org/abs/1709.07420}{arXiv:1709.07420}},
}

\bib{Arakelov71}{article}{
      author={Arakelov, Sergei~J.},
       title={Families of algebraic curves with fixed degeneracies},
        date={1971},
        ISSN={0373-2436},
     journal={Izv. Akad. Nauk SSSR Ser. Mat.},
      volume={35},
       pages={1269\ndash 1293},
      review={\MR{MR0321933 (48 \#298)}},
}

\bib{BCHM10}{article}{
      author={Birkar, Caucher},
      author={Cascini, Paolo},
      author={Hacon, Christopher~D.},
      author={McKernan, James},
       title={Existence of minimal models for varieties of log general type},
        date={2010},
     journal={Journal of the AMS},
      volume={23},
       pages={405\ndash 468},
  note={\href{http://dx.doi.org/10.1090/S0894-0347-09-00649-3}{DOI:10.1090/S0894-0347-09-00649-3}},
}

\bib{MR2449950}{article}{
      author={Berndtsson, Bo},
      author={P{\u a}un, Mihai},
       title={Bergman kernels and the pseudoeffectivity of relative canonical
  bundles},
        date={2008},
        ISSN={0012-7094},
     journal={Duke Math. J.},
      volume={145},
      number={2},
       pages={341\ndash 378},
         url={http://dx.doi.org/10.1215/00127094-2008-054},
      review={\MR{2449950 (2009k:32020)}},
}

\bib{BPW}{article}{
      author={Berndtsson, Bo},
      author={P{\u a}un, Mihai},
      author={Wang, Xu},
       title={Algebraic fiber spaces and curvature of higher direct images},
        date={2017},
  note={\href{https://arxiv.org/abs/1704.02279}{arxiv.org/abs/1704.02279}},
}

\bib{Bru15}{article}{
      author={Brunbarbe, Yohan},
       title={Symmetric differentials and variation of {H}odge structures},
        date={2015},
        note={To appear in Crelle},
}

\bib{Bru17}{article}{
      author={Brunbarbe, Yohan},
       title={Semi-positivity from {H}iggs bundles},
        date={2017},
        note={Preprint
  \href{https://arxiv.org/abs/1707.08495}{arXiv:1707.08495}},
}

\bib{Cam04}{article}{
      author={Campana, Fr{\'e}d{\'e}ric},
       title={Orbifolds, special varieties and classification theory},
        date={2004},
        ISSN={0373-0956},
     journal={Ann. Inst. Fourier (Grenoble)},
      volume={54},
      number={3},
       pages={499\ndash 630},
      review={\MR{MR2097416 (2006c:14013)}},
}

\bib{Ca95}{article}{
      author={Campana, Fr{\'e}d{\'e}ric},
       title={Fundamental group and positivity of cotangent bundles of compact
  {K}{\"a}hler manifolds},
        date={1995},
        ISSN={1056-3911},
     journal={J. Algebraic Geom.},
      volume={4},
      number={3},
       pages={487\ndash 502},
      review={\MR{1325789 (96f:32054)}},
}

\bib{CKP12}{article}{
      author={Campana, Fr\'ed\'eric},
      author={Koziarz, Vincent},
      author={P\u{a}un, Mihai},
       title={Numerical character of the effectivity of adjoint line bundles},
        date={2012},
     journal={Ann. Inst. Fourier (Grenoble)},
      volume={62},
      number={1},
       pages={107\ndash 119},
  note={\href{http://www.numdam.org/item/AIF_2012__62_1_107_0}{DOI:10.5802/aif.2701}},
}

\bib{CKS}{article}{
      author={Cattani, Eduardo},
      author={Kaplan, Aroldo},
      author={Schmid, Wilfried},
       title={Degeneration of {H}odge structures},
        date={1986},
     journal={Ann. Math.},
      volume={123},
      number={3},
       pages={457\ndash 535},
         url={http://www.jstor.org/stable/1971333},
}

\bib{CKT16}{unpublished}{
      author={Claudon, Beno{\^\i}t},
      author={Kebekus, Stefan},
      author={Taji, Behrouz},
       title={Generic positivity and applications to hyperbolicity of moduli
  spaces},
        date={2016},
        note={Preprint
  \href{http://arxiv.org/abs/1610.09832}{arXiv:1610.09832}},
}

\bib{Claudon15}{article}{
      author={Claudon, Beno{\^\i}t},
       title={Semi-positivit\'e du cotangent logarithmique et conjecture de
  Shafarevich et Viehweg (d'apr\'es {C}amapna, {P}\u{a}un, {T}aji, ...)},
        date={2015},
     journal={S\'eminaire Bourbaki},
         url={http://arxiv.org/abs/1310.5391},
        note={Preprint
  \href{http://arxiv.org/abs/1310.5391}{arXiv:1603.09568}},
}

\bib{CP11}{article}{
      author={Campana, Fr{\'e}d{\'e}ric},
      author={Peternell, Thomas},
       title={Geometric stability of the cotangent bundle and the universal
  cover of a projective manifold},
        date={2011},
        ISSN={0037-9484},
     journal={Bull. Soc. Math. France},
      volume={139},
      number={1},
       pages={41\ndash 74},
}

\bib{CP16}{unpublished}{
      author={Campana, Fr{\'e}d{\'e}ric},
      author={P{\u a}un, Mihai},
       title={Foliations with positive slopes and birational stability of
  orbifold cotangent bundles},
        date={2015},
        note={Preprint
  \href{https://arxiv.org/abs/1508.02456}{arXiv:1508.02456}},
}

\bib{CP13}{article}{
      author={Campana, Fr{\'e}d{\'e}ric},
      author={P\u{a}un, Mihai},
       title={Orbifold generic semi-positivity: an application to families of
  canonically polarized manifolds},
        date={2015},
        ISSN={0373-0956},
     journal={Ann. Inst. Fourier (Grenoble)},
      volume={65},
      number={2},
       pages={835\ndash 861},
         url={http://aif.cedram.org/item?id=AIF_2015__65_2_835_0},
        note={Available at
  \href{http://aif.cedram.org/item?id=AIF_2015__65_2_835_0}{http://aif.cedram.org/item?id=AIF\_2015\_\_65\_2\_835\_0}.
  Preprint \href{http://arxiv.org/abs/1303.3169}{arXiv:1303.3169}},
      review={\MR{3449168}},
}

\bib{Deligne70}{book}{
      author={Deligne, Pierre},
       title={{\'E}quations diff{\'e}rentielles {\`a} points singuliers
  r{\'e}guliers},
   publisher={Springer-Verlag},
     address={Berlin},
        date={1970},
        note={Lecture Notes in Mathematics, Vol. 163},
      review={\MR{54 \#5232}},
}

\bib{Deng}{article}{
      author={Deng, Ya},
       title={Kobayashi hyperbolicity of moduli spaces of minimal projective
  manifolds of general type (with the appendix by dan abramovich)},
        date={2018},
        note={Preprint
  \href{https://arxiv.org/abs/1806.01666}{arxiv.org/abs/1806.01666}},
}

\bib{FF17}{article}{
      author={Fujno, Osamu},
      author={Fujisawa, Taro},
       title={On semipositivity theorems},
        date={2017},
        note={Preprint
  \href{https://arxiv.org/abs/1701.02039}{arXiv:1701.02039}},
}

\bib{MR2665168}{book}{
      author={Huybrechts, Daniel},
      author={Lehn, Manfred},
       title={The geometry of moduli spaces of sheaves},
     edition={Second},
      series={Cambridge Mathematical Library},
   publisher={Cambridge University Press},
     address={Cambridge},
        date={2010},
        ISBN={978-0-521-13420-0},
         url={http://dx.doi.org/10.1017/CBO9780511711985},
      review={\MR{2665168 (2011e:14017)}},
}

{
\bib{KO68}{article}{
	Author = {Katz, Nicholas},
	Author = {Oda, Tadao},
	Journal = {J. Math. Kyoto Univ.},
	Pages = {199--213},
	Title = {on the differentiation of the De Rham cohomology classes with respect to parameters},
	Volume = {1},
	Year = {1968}}
}	

\bib{MR2976311}{article}{
      author={Jabbusch, Kelly},
      author={Kebekus, Stefan},
       title={Positive sheaves of differentials coming from coarse moduli
  spaces},
        date={2011},
        ISSN={0373-0956},
     journal={Ann. Inst. Fourier (Grenoble)},
      volume={61},
      number={6},
       pages={2277\ndash 2290 (2012)},
         url={http://dx.doi.org/10.5802/aif.2673},
      review={\MR{2976311}},
}

\bib{Kawamata85}{article}{
      author={Kawamata, Yujiro},
       title={Minimal models and the {K}odaira dimension of algebraic fiber
  spaces},
        date={1985},
        ISSN={0075-4102},
     journal={J. Reine Angew. Math.},
      volume={363},
       pages={1\ndash 46},
         url={http://dx.doi.org/10.1515/crll.1985.363.1},
  note={\href{http://dx.doi.org/10.1515/crll.1985.363.1}{DOI:10.1515/crll.1985.363.1}},
      review={\MR{814013 (87a:14013)}},
}

\bib{KK08}{article}{
      author={Kebekus, Stefan},
      author={Kov{\'a}cs, S{\'a}ndor~J.},
       title={Families of canonically polarized varieties over surfaces},
        date={2008},
        ISSN={0020-9910},
     journal={Invent. Math.},
      volume={172},
      number={3},
       pages={657\ndash 682},
  note={\href{http://dx.doi.org/10.1007/s00222-008-0128-8}{DOI:10.1007/s00222-008-0128-8}.
  Preprint \href{http://arxiv.org/abs/0707.2054}{arXiv:0707.2054}},
      review={\MR{2393082}},
}

\bib{KK10}{article}{
      author={Kebekus, Stefan},
      author={Kov{\'a}cs, S{\'a}ndor~J.},
       title={The structure of surfaces and threefolds mapping to the moduli
  stack of canonically polarized varieties},
        date={2010},
        ISSN={0012-7094},
     journal={Duke Math. J.},
      volume={155},
      number={1},
       pages={1\ndash 33},
         url={http://dx.doi.org/10.1215/00127094-2010-049},
      review={\MR{2730371 (2011i:14060)}},
}

\bib{MR2057020}{article}{
      author={Keel, Sean},
      author={Matsuki, Kenji},
      author={McKernan, James},
       title={Corrections to: ``{L}og abundance theorem for threefolds''
  [{D}uke {M}ath. {J}. {\bf 75} (1994), no. 1, 99--119; mr1284817]},
        date={2004},
        ISSN={0012-7094},
     journal={Duke Math. J.},
      volume={122},
      number={3},
       pages={625\ndash 630},
      review={\MR{2057020 (2005a:14018)}},
}

\bib{KM98}{book}{
      author={Koll{\'a}r, J{\'a}nos},
      author={Mori, Shigefumi},
       title={Birational geometry of algebraic varieties},
      series={Cambridge Tracts in Mathematics},
   publisher={Cambridge University Press},
     address={Cambridge},
        date={1998},
      volume={134},
        ISBN={0-521-63277-3},
      review={\MR{2000b:14018}},
}

\bib{SecondAsterisque}{book}{
      author={Koll{\'a}r, J{\'a}nos},
       title={Flips and abundance for algebraic threefolds},
   publisher={Soci{\'e}t{\'e} Math{\'e}matique de France},
     address={Paris},
        date={1992},
        note={Papers from the Second Summer Seminar on Algebraic Geometry held
  at the University of Utah, Salt Lake City, Utah, August 1991, Ast{\'e}risque
  No. 211 (1992)},
      review={\MR{1225842 (94f:14013)}},
}

\bib{Kollar93}{article}{
      author={Koll{\'a}r, J{\'a}nos},
       title={Shafarevich maps and plurigenera of algebraic varieties},
        date={1993},
        ISSN={0020-9910},
     journal={Invent. Math.},
      volume={113},
      number={1},
       pages={177\ndash 215},
         url={http://dx.doi.org/10.1007/BF01244307},
  note={\href{http://dx.doi.org/10.1007/BF01244307}{DOI:10.1007/BF01244307}},
      review={\MR{1223229 (94m:14018)}},
}

\bib{Kollar95s}{book}{
      author={Koll{\'a}r, J{\'a}nos},
       title={Shafarevich maps and automorphic forms},
      series={M. B. Porter Lectures},
   publisher={Princeton University Press},
     address={Princeton, NJ},
        date={1995},
        ISBN={0-691-04381-7},
      review={\MR{1341589 (96i:14016)}},
}

\bib{Kovacs97c}{article}{
      author={Kov{\'a}cs, S{\'a}ndor~J},
       title={Families over a base with a birationally nef tangent bundle},
        date={1997},
        ISSN={0025-5831},
     journal={Math. Ann.},
      volume={308},
      number={2},
       pages={347\ndash 359},
      review={\MR{1464907 (98h:14039)}},
}

\bib{Kovacs00a}{article}{
      author={Kov{\'a}cs, S{\'a}ndor~J},
       title={Algebraic hyperbolicity of f{i}ne moduli spaces},
        date={2000},
        ISSN={1056-3911},
     journal={J. Algebraic Geom.},
      volume={9},
      number={1},
       pages={165\ndash 174},
      review={\MR{1713524 (2000i:14017)}},
}

\bib{Kovacs02}{article}{
      author={Kov{\'a}cs, S{\'a}ndor~J},
       title={Logarithmic vanishing theorems and {A}rakelov-{P}arshin
  boundedness for singular varieties},
        date={2002},
        ISSN={0010-437X},
     journal={Compositio Math.},
      volume={131},
      number={3},
       pages={291\ndash 317},
      review={\MR{2003a:14025}},
}

\bib{Laz04-I}{book}{
      author={Lazarsfeld, Robert},
       title={Positivity in algebraic geometry. {I}},
      series={Ergebnisse der Mathematik und ihrer Grenzgebiete. 3. Folge. A
  Series of Modern Surveys in Mathematics [Results in Mathematics and Related
  Areas. 3rd Series. A Series of Modern Surveys in Mathematics]},
   publisher={Springer-Verlag},
     address={Berlin},
        date={2004},
      volume={48},
        ISBN={3-540-22533-1},
        note={Classical setting: line bundles and linear series},
      review={\MR{2095471 (2005k:14001a)}},
}

\bib{Mori87}{incollection}{
      author={Mori, Shigefumi},
       title={Classif{i}\-cation of higher-dimensional varieties},
        date={1987},
   booktitle={Algebraic geometry, bowdoin, 1985 (brunswick, maine, 1985)},
      series={Proc. Sympos. Pure Math.},
      volume={46},
   publisher={Amer. Math. Soc.},
     address={Providence, RI},
       pages={269\ndash 331},
      review={\MR{927961 (89a:14040)}},
}

\bib{Pau16}{article}{
      author={P{\u a}un, Mihai},
       title={Singular hermitian metrics and positivity of direct images of
  pluricanonical bundles},
        date={2016},
        note={Preprint
  \href{https://arxiv.org/abs/1606.00174}{arxiv.org/abs/1606.00174}},
}

\bib{Parshin68}{article}{
      author={Parshin, Aleksey~N.},
       title={Algebraic curves over function fields. {I}},
        date={1968},
        ISSN={0373-2436},
     journal={Izv. Akad. Nauk SSSR Ser. Mat.},
      volume={32},
       pages={1191\ndash 1219},
      review={\MR{0257086 (41 \#1740)}},
}

\bib{MR2871152}{article}{
      author={Patakfalvi, {\relax Zs}olt},
       title={Viehweg's hyperbolicity conjecture is true over compact bases},
        date={2012},
        ISSN={0001-8708},
     journal={Adv. Math.},
      volume={229},
      number={3},
       pages={1640\ndash 1642},
         url={http://dx.doi.org/10.1016/j.aim.2011.12.013},
      review={\MR{2871152 (2012m:14072)}},
}

\bib{PS15}{article}{
      author={Popa, M.},
      author={Schnell, Christian},
       title={Viehweg's hyperbolicity conjecture for families with maximal
  variation},
        date={2017},
     journal={Invent. Math},
      volume={208},
      number={3},
       pages={677\ndash 713},
}

\bib{PTW}{article}{
      author={Popa, Mihnea},
      author={Taji, Behrouz},
      author={Wu, Lei},
       title={Brody hyperbolicity of base spaces of families of varieties with
  maximal variation},
        Journal = {Alg. Numb. Theory},
        Volume = {13},
        Number = {9},
        pages = {2205\ndash2242},
        date={2019},
        note={Preprint
  \href{https://arxiv.org/abs/1801.05898}{arxiv.org/abs/1801.05898}},
}

\bib{Sch12}{article}{
      author={Schumacher, Georg},
       title={Positivity of relative canonical bundles and applications},
        date={2012},
     journal={Invent. Math},
      volume={190},
      number={1},
       pages={1\ndash 56},
  note={\href{http://dx.doi.org/10.1007/s00222-012-0374-7}{DOI:10.1007/s00222-012-0374-7}},
}

\bib{Sch17}{article}{
      author={Schnell, Christian},
       title={On a theorem of campana and p\u{a}un},
        date={2017},
     journal={Epi. GA},
      volume={1},
      number={8},
        note={Online \href{https://epiga.episciences.org/3871/pdf}{3871/pdf}},
}

\bib{Shaf63}{incollection}{
      author={Shafarevich, Igor~R.},
       title={Algebraic number fields},
        date={1963},
   booktitle={Proc. internat. congr. mathematicians (stockholm, 1962)},
   publisher={Inst. Mittag-Leffler},
     address={Djursholm},
       pages={163\ndash 176},
        note={English translation: Amer.\ Math.\ Soc.\ Transl.\ (2) {\bf 31}
  (1963), 25--39},
      review={\MR{0202709 (34 \#2569)}},
}

\bib{MR1040197}{article}{
      author={Simpson, Carlos~T.},
       title={Harmonic bundles on noncompact curves},
        date={1990},
        ISSN={0894-0347},
     journal={J. Amer. Math. Soc.},
      volume={3},
      number={3},
       pages={713\ndash 770},
         url={http://dx.doi.org/10.2307/1990935},
        note={\href{http://dx.doi.org/10.2307/1990935}{DOI:10.2307/1990935}},
      review={\MR{1040197}},
}

\bib{Ste76}{article}{
      author={Steenbrink, Joseph},
       title={Limits of hodge structures},
        date={1976},
     journal={Invent. Math.},
      volume={31},
      number={3},
       pages={229\ndash 257},
}

\bib{taji16}{article}{
      author={Taji, Behrouz},
       title={The isotriviality of smooth families of canonically polarized
  manifolds over a special quasi-projective base},
        date={2016},
        ISSN={1570-5846},
     journal={Compositio. Math.},
      volume={152},
       pages={1421\ndash 1434},
         url={http://journals.cambridge.org/article_S0010437X1600734X},
  note={\href{http://journals.cambridge.org/article_S0010437X1600734X}{DOI:10.1112/S0010437X1600734X}},
}

\bib{taji20}{article}{
      author={Taji, Behrouz},
       title={Birational geometry of smooth families of varieties admitting good minimal models},
        date={2020},     
  note={Preprint \href{https://arxiv.org/abs/2005.01025}{arXiv:2005.01025}},
}

\bib{TY15}{article}{
      author={To, Wing-Keung},
      author={Yeung, Sai-Kee},
       title={Finsler metrics and {K}obayashi hyperbolicity of the moduli
  spaces of canonically polarized manifolds},
        date={2015},
     journal={Ann. Math.},
      volume={181},
      number={2},
       pages={547\ndash 586},
  note={\href{http://annals.math.princeton.edu/2015/181-2/p03}{DOI:10.4007/annals.2015.181.2.3}},
}

\bib{TY16}{article}{
      author={To, Wing-Keung},
      author={Yeu{}ng, Sai-Kee},
       title={Augmented {W}eil-{P}etersson metrics on moduli spaces of
  polarized {R}icci-flat {K}\"ahler manifolds and orbifolds},
        date={2016},
        note={To appear in the Asian J. Math. volume in honor of Ngaiming Mok},
}

\bib{Viehweg01}{incollection}{
      author={Viehweg, Eckart},
       title={Positivity of direct image sheaves and applications to families
  of higher dimensional manifolds},
        date={2001},
   booktitle={School on vanishing theorems and effective results in algebraic
  geometry (trieste, 2000)},
      series={ICTP Lect. Notes},
      volume={6},
   publisher={Abdus Salam Int. Cent. Theoret. Phys., Trieste},
       pages={249\ndash 284},
        note={Available on the
  \href{http://www.ictp.trieste.it/~pub_off/services}{ICTP web site}},
      review={\MR{1919460 (2003f:14024)}},
}

\bib{Viehweg83}{incollection}{
      author={Viehweg, Eckart},
       title={Weak positivity and the additivity of the {K}odaira dimension for
  certain fibre spaces},
        date={1983},
   booktitle={Algebraic varieties and analytic varieties (tokyo, 1981)},
      series={Adv. Stud. Pure Math.},
      volume={1},
   publisher={North-Holland},
     address={Amsterdam},
       pages={329\ndash 353},
      review={\MR{715656 (85b:14041)}},
}

\bib{Viehweg95}{book}{
      author={Viehweg, E.},
       title={Quasi-projective moduli for polarized manifolds},
      series={Ergebnisse der Mathematik und ihrer Grenzgebiete (3)},
   publisher={Springer-Verlag},
     address={Berlin},
        date={1995},
      volume={30},
        ISBN={3-540-59255-5},
      review={\MR{1368632 (97j:14001)}},
}

\bib{Vie-Zuo01}{article}{
      author={Viehweg, Eckart},
      author={Zuo, Kang},
       title={On the isotriviality of families of projective manifolds over
  curves},
        date={2001},
        ISSN={1056-3911},
     journal={J. Algebraic Geom.},
      volume={10},
      number={4},
       pages={781\ndash 799},
      review={\MR{1838979 (2002g:14012)}},
}

\bib{VZ02}{incollection}{
      author={Viehweg, Eckart},
      author={Zuo, K.},
       title={Base spaces of non-isotrivial families of smooth minimal models},
        date={2002},
   booktitle={Complex geometry (g{\"o}ttingen, 2000)},
   publisher={Springer},
     address={Berlin},
       pages={279\ndash 328},
      review={\MR{1922109 (2003h:14019)}},
}

\bib{Vie-Zuo03a}{article}{
      author={Viehweg, Eckart},
      author={Zuo, Kang},
       title={On the {B}rody hyperbolicity of moduli spaces for canonically
  polarized manifolds},
        date={2003},
        ISSN={0012-7094},
     journal={Duke Math. J.},
      volume={118},
      number={1},
       pages={103\ndash 150},
      review={\MR{1978884 (2004h:14042)}},
}

\bib{MR0451180}{article}{
      author={Yau, Shing-Tung},
       title={Calabi's conjecture and some new results in algebraic geometry},
        date={1977},
        ISSN={0027-8424},
     journal={Proc. Nat. Acad. Sci. U.S.A.},
      volume={74},
      number={5},
       pages={1798\ndash 1799},
      review={\MR{0451180 (56 \#9467)}},
}

\bib{Zuo00}{article}{
      author={Zuo, Kang},
       title={On the negativity of kernels of {K}odaira-{S}pencer maps on
  {H}odge bundles and applications},
        date={2000},
        ISSN={1093-6106},
     journal={Asian J. Math.},
      volume={4},
      number={1},
       pages={279\ndash 301},
        note={Kodaira's issue},
      review={\MR{1803724 (2002a:32011)}},
}

\end{biblist}
\end{bibdiv}

\end{document}